\newcommand{\ra}[1]{\renewcommand{\arraystretch}{#1}}
\newcommand{\BR}{\mathbb{R}}
\newcommand{\ME}{\mathbb E}
\newcommand{\MP}{\mathbb P}
\newcommand{\CA}{\mathcal A}
\newcommand{\CL}{\mathcal L}
\newcommand{\si}{\sigma}
\renewcommand{\phi}{\varphi}
\newcommand{\eps}{\varepsilon}
\newcommand{\la}{\lambda}
\newcommand{\Ra}{\Rightarrow}
\newcommand{\ol}{\overline}
\newcommand{\norm}[1]{\lVert#1\rVert}
\renewcommand{\comment}[1]{}
\newcommand{\md}{\mathrm{d}}
\DeclareMathOperator{\Var}{Var}
\DeclareMathOperator{\Exp}{Exp}
\theoremstyle{plain}
\newtheorem{thm}{Theorem}[section]
\newtheorem{lemma}[thm]{Lemma}
\newtheorem{corollary}[thm]{Corollary}
\numberwithin{equation}{section}
\theoremstyle{definition}
\newtheorem{definition}{Definition}[section]
\newtheorem{assumption}{Assumption}[section]
\theoremstyle{remark}
\newtheorem{remark}{Remark}[section]
\newtheorem{ex}{Example}
\begin{document}

\title[Convergence rate of ruin probabilities]{Exponential convergence rate of ruin probabilities\\ for level-dependent L\'evy-driven risk processes}

\author{Pierre-Olivier Goffard}

\address{\scriptsize Department of Statistics and Applied Probability, University of California, Santa Barbara}

\email{goffard@pstat.ucsb.edu}

\author{Andrey Sarantsev}

\address{\scriptsize Department of Statistics and Applied Probability, University of California, Santa Barbara}

\email{sarantsev@pstat.ucsb.edu}

\begin{abstract} We explicitly find the rate of exponential long-term convergence for the ruin probability in a level-dependent L\'evy-driven risk model, as time goes to infinity. Siegmund duality allows to reduce the pro blem to long-term convergence of a reflected jump-diffusion to its stationary distribution, which is handled via Lyapunov functions.
\end{abstract}

\keywords{ruin probability, uniform ergodicity, Lyapunov function, stochastically ordered process, Siegmund duality}

%\ams{91B30}{60H10, 60J60, 60J75}

\subjclass[2010]{60H10, 60J60, 60J75, 91B30}

\maketitle

\thispagestyle{empty}

\section{Introduction}
A non-life insurance company holds at time $t=0$ an initial capital $u = X(0)\geq0$, collects  premiums at a rate $p(x)>0$ depending on the current level of the capital $X(t)=x$, and pays from time to time a compensation (when a claim is filed). The aggregated size of claims up to time $t>0$ is modeled by a compound Poisson process $(L(t)\text{ , }t\geq0)$. That is, the number of claims is governed by a homogeneous Poisson process of intensity $\beta$ independent from the claim sizes. The claim sizes, in turn, form a sequence $U_1,U_2,\ldots$ of i.i.d. nonnegative random variables with cumulative distribution function $B(\cdot)$. The net worth of the insurance company is then given by a continuous-time stochastic process $X = (X(t),\, t \ge 0)$, with
\begin{equation}\label{eq:CP-classic}
X(t) = u+\int_0^{t}p(X(s))\text{d}s-\sum_{k=1}^{N(t)}U_k=u+\int_0^{t}p(X(s))\,\text{d}s-L(t),\ t \ge 0.
\end{equation}
Examples of such level-dependent premium rate include the insurance company  downgrading the premium rate from $p_1$ to $p_2$ when the reserves reach a certain threshold; or incorporating a constant interest force: $p(x)=p+ix$. In this work, a more general risk model is considered. The surplus \eqref{eq:CP-classic} is perturbed by a Brownian motion $\{W(t)\text{ , }t\geq0\}$, multiplied by a diffusion parameter $\si$, to account for the fluctuations around the premium rate. This diffusion parameter may also depend on $X(t)$. We further let the accumulated liability $L(t)$ be governed by a pure jump nondecreasing L\'evy process, starting from $L(0) = 0$. The financial reserves of the insurance company evolve according to the following dynamics:
\begin{equation}
\label{eq:LevelDependentLevyDrivenRiskProcess}
\text{d}X(t)= p(X(t))\,\text{d}t+\sigma(X(t))\,\text{d}W(t)-\text{d}L(t),\quad X(0) = u.
\end{equation}
In risk theory, one of the main challenges is the evaluation of ruin probabilities. The probability of ultimate ruin is the probability that the reserves ever drop below zero:
\begin{equation}
\label{eq:ruin-infinite-horizon}
\psi(u) = \MP\bigl(\inf\limits_{t \ge 0}X(t) \le 0\bigr).
\end{equation}
We stress dependence of $\psi$ on the initial capital $u$. The probability of ruin by time $T$ is defined as
\begin{equation}
\label{eq:ruin-finite-horizon}
\psi(u, T) := \MP\bigl(\inf\limits_{0 \le t \le T}X(t) \le 0\bigr).
\end{equation}
We often refer to $\psi(u)$ and $\psi(u,T)$ as ruin probabilities for infinite and finite time horizon, respectively. For a comprehensive overview on risk theory and ruin probabilities, see the book~\cite{AsAl10}.

We study the rate of exponential convergence of the finite-time horizon ruin probability toward its infinite-time counterpart. The goal of this article is to provide an explicit estimate for such rate: To find constants $C, k > 0$ such that
\begin{equation}
\label{eq:exp-convergence}
0 \le \psi(u) - \psi(u, T) \le Ce^{-kT},\ \ \mbox{for all}\ \ T, u \ge 0.
\end{equation}
This is achieved via a duality argument. For the original model~\eqref{eq:CP-classic}, define the {\it storage process} $Y=\{Y(t)\text{ , }t\geq0\}$ as follows:
\begin{equation}\label{eq:StorageProcess}
Y(t)=L(t)-\int_{0}^{t}p(Y(s))\,\md s.
\end{equation}
We assume that $p(y) = 0$ for $y < 0$. This makes zero a reflecting barrier. This is essentially a time-reversed version of the risk model \eqref{eq:CP-classic}, reflected at $0$. For the general model~\eqref{eq:LevelDependentLevyDrivenRiskProcess} perturbed by Brownian motion, the dual process is a reflected jump-diffusion on the positive half-line. As $t \to \infty$, $Y(t)$ weakly converges to some distribution $Y(\infty)$. The crucial observation is: For $T > 0$ and $u \ge 0$,
$$
\MP(Y(T) \ge u) = \psi(u, T),\quad \MP(Y(\infty) \ge u) = \psi(u).
$$
This is a particular case of Siegmund duality, see Siegmund \cite{Siegmund}. This method was first employed in \cite{Levy}, for the similar duality between absorbed and reflected Brownian motion. It has become a standard tool in risk theory since the seminal paper of Prabhu \cite{Pa61}, see also \cite[Chapter III, Section 2]{AsAl10}. The problem ~\eqref{eq:exp-convergence} therefore reduces to the study of the convergence of $Y(t)$ toward $Y(\infty)$ as $t \to \infty$:
$$
0 \le \mathbb{P}(Y(\infty)>u) - \MP(Y(T) \ge u) \le Ce^{-kT}.
$$
A \textit{stochastically ordered} real-valued Markov process $Y = \{Y(t)\text{ , }t\geq0\}$ is such that, for all $y_1 \ge y_2$, we can couple two copies $Y_1(t)$ and $Y_2(t)$ of $Y(t)$ starting from $Y_1(0) = y_1$ and $Y_2(0) = y_2$, in such a way that $Y_1(t) \ge Y_2(t)$ a.s. for all $t \ge 0$. A {\it Lyapunov function} for a Markov process with generator $\mathcal L$ is, roughly speaking, a function $V \ge 1$ such that  $\mathcal L V(x) \le -cV(x)$ for some constant $c > 0$, for all $x$ outside of a compact set.  Then we can combine this coupling method with a Lyapunov function to get a simple, explicit, and in some cases, sharp estimate for the rate $k$. This method was first applied in Lund and Tweedie \cite{LT1996} for discrete-time Markov chains, and in Lund et al. \cite{LMT1996} for continuous-time Markov processes. A direct application of their results yields the rate of convergence for the storage process defined in \eqref{eq:StorageProcess} and the level-dependent compound Poisson risk model \eqref{eq:CP-classic}. However, the dual model associated to the risk process~\eqref{eq:LevelDependentLevyDrivenRiskProcess} is a more general process: This is a reflected jump-diffusion on the positive half-line.

\smallskip

The same method as in Lund et al. \cite{LMT1996} has been refined in a recent paper by Sarantsev \cite{MyOwn12} and applied to reflected jump-diffusions on the half line. The jump part is not a general L\'evy process, but rather a state-dependent compound Poisson process, which makes a.s. finitely many jumps in finite time. In a recent paper \cite{MyOwn16}, it was applied to {\it Walsh diffusions} (processes which move along the rays emanating from the origin in $\BR^d$ as one-dimensional diffusions; as they hit the origin, they choose a new ray randomly). Without attempting to give an exhaustive survey, let us mention classic papers \cite{DMT1995, MT1993a, MT1993b} which use Lyapunov functions (without stochastic ordering) to prove the very fact of exponential long-term convergence, and a related paper of Sarantsev \cite{MyOwn10}. However, to estimate the rate $k$ explicitly is a harder problem. Some partial results in this direction are provided in the papers~\cite{BCG2008, Davies, Explicit, RR1996, RT1999, RT2000}.
%In the latter paper, the rate $k$ for a reflected Brownian motion with negative drift on the positive half-line.
\smallskip

In this paper, we combine these two methods: Lyapunov functions and stochastic ordering, to find the rate of convergence of the process $Y$, which is dual to the original process $X$ from~\eqref{eq:LevelDependentLevyDrivenRiskProcess}. This process $Y$, as noted above, is a reflected jump-diffusion on the half-line. We apply the same method developed in \cite{LMT1996, MyOwn12}. In the general case, it can have infinitely many jumps during finite time, or can have no diffusion component, as in the level dependent compound Poisson risk model from~\eqref{eq:CP-classic}. Therefore, we need to adjust the argument from \cite{MyOwn12}. Our method only applies in the case of light tailed claim size. Asmussen and Teugels in \cite{AsTe96} studied the convergence of ruin probabilities in the compound Poisson risk model with sub-exponentially distributed claim size. It is shown that the convergence takes place at a sub-exponential rate.

\smallskip

The paper is organized as follows. In Section 2, we define assumptions on $p$, $\sigma$, and the L\'evy process $L$. We also introduce the concept of Siegmund duality to reduce the problem to convergence rate of a reflected jump-diffusion to its stationary distribution. Our main results are stated in Section 3: Theorem~\ref{thm:main-1} and Corollary \ref{cor:main-1} provide an estimate for the exponential rate of convergence.  Section \ref{sec:ComputationConvergenceRate} gives examples of calculations of the rate $k$. The proof of Theorem~\ref{thm:main-1} is carried out in Section 5. Proofs of some technical  lemmata are postponed until Appendix.

\section{Definitions and Siegmund duality}
First, let us impose assumptions on our model~\eqref{eq:LevelDependentLevyDrivenRiskProcess}. Recall that the wealth of the insurance company is modeled by the right-continuous process with left limits $X = (X(t),\, t \ge 0)$, governed by the following integral equation:
$$
X(t) = u + \int_0^tp(X(s))\,\md s + \int_0^t\si(X(s))\,\md W(s) - L(t),
$$
or, equivalently, by the stochastic differential equation (SDE) with initial condition $X(0) = u$, given by \eqref{eq:LevelDependentLevyDrivenRiskProcess}. We say that $X$ is {\it driven} by the Brownian motion $W$ and L\'evy process $L$.
%\begin{equation}
%\label{eq:LevelDependentLevyDrivenRiskProcess}
%\md X(t) = p(X(t))\,\md t + \si(X(t))\,\md W(t) - \md L(t).
%\end{equation}
A function $f : \BR \to \BR$, or $f : \BR_+ \to \BR$, is {\it Lipschitz continuous} if there exists a constant $K$ such that $|f(x) - f(y)| \le K|x-y|$ for all $x$ and $y$.
\begin{assumption}\label{as:1}
The function $p : \BR_+ \to \BR$ is Lipschitz. The function $\si : \BR_+ \to \BR_{\begin{color}{blue}+\end{color}}$ is bounded, and continuously differentiable with Lipschitz continuous derivative $\si'$.
\label{asmp:Lipschitz}
\end{assumption}
\begin{assumption}
The process $L$ is a pure jump subordinator, that is, a L\'evy process (stationary independent increments) with $L(0) = 0$, and with a.s. nondecreasing trajectories, which are right continuous with left limits. The process $W$ is a standard Brownian motion, independent of $L$.
\label{asmp:Levy}
\end{assumption}
Assumption \ref{asmp:Lipschitz} is not too restrictive as it allows to consider classical risk process such as: (a) the compound Poisson risk process when $p(x)=p$, and $\sigma(x)=0$; (b) the compound Poisson risk process under constant interest force when $p(x)=p+ix$, and $\sigma(x)=0$. However, the regime-switching premium rate when the surplus hits some target is not covered.

\smallskip

Assumption \ref{asmp:Levy} allows the study of the compound Poisson risk process perturbed by a diffusion when $p(x)=p$, and $\sigma(x)=\sigma$, extensively discussed in the paper by Dufresne and Gerber \cite{DuGe91}, as well as the L\'evy-driven risk process defined for example in Morales and Schoutens \cite{MoSc03}. It is known from the standard theory, see for example \cite[Section 6.2]{KS1998}, that the {\it L\'evy measure} of this process is a measure $\mu$ on $\BR_+$ which satisfies
\begin{equation}
\label{eq:classic-mean}
\int_0^{\infty}(1\wedge x)\,\mu(\md x) < \infty.
\end{equation}
From Assumption~\ref{asmp:Levy}, we have:
$$
\ME e^{-\lambda L(t)} = \exp\left(t\kappa(-\lambda)\right),\ \ \mbox{for every}\ \ t, \lambda \ge 0,
$$
where $\kappa(\lambda)$ is the {\it L\'evy exponent:}
\begin{equation}
\label{eq:levy-exp}
\kappa(\lambda) := \int_0^{\infty}\left[e^{\lambda x} - 1\right]\mu(\md x),\ \lambda \in \BR.
\end{equation}

Under Assumptions~\ref{asmp:Lipschitz} and~\ref{asmp:Levy}, $L$ is a Feller continuous strong Markov process, with generator
\begin{equation}
\label{eq:explicit-generator}
\mathcal N f(x) = \int_0^{\infty}\left[f(x+y) - f(x)\right]\,\mu(\md y),
\end{equation}
for $f \in C^2(\BR)$ with compact support. For our purposes, we impose an additional assumption.

\begin{assumption} The  measure $\mu$ has finite exponential moment: for some $\lambda_0 > 0$, we have
\begin{equation}
\label{eq:finite-exp}
\int_1^{\infty}e^{\lambda_0 x}\,\mu(\md x) < \infty.
\end{equation}
\label{asmp:finite-exp}
\end{assumption}
\begin{remark}
The existence of exponential moments on the jump sizes distribution prevent us from considering heavy tailed claim size distribution as in Asmussen and Teugels \cite{AsTe96}.
\end{remark}
Under Assumption~\ref{asmp:finite-exp}, we can combine~\eqref{eq:classic-mean} and~\eqref{eq:finite-exp} to get:
$$
\kappa(\lambda) < \infty \ \ \mbox{for}\ \ \lambda \in [0, \lambda_0).
$$
Then we can extend the formula~\eqref{eq:explicit-generator} for functions $f \in C^2(\BR)$ which satisfy
\begin{equation}
\label{eq:exp-bdd}
\sup\limits_{x \ge 0}e^{-\lambda x}|f(x)| < \infty\ \mbox{for some}\ \la \in (0, \la_0).
\end{equation}

The proof of the following technical lemma is postponed to the Appendix \ref{appendix:ProofLemma1}.
\begin{lemma} Under Assumptions~\ref{asmp:Levy} and~\ref{asmp:finite-exp}, the following quantity is finite:
\label{lemma:tech}
\begin{equation}
\label{eq:Mean}
m(\mu) := \int_0^{\infty}x\,\mu(\md x) < \infty.
\end{equation}
\end{lemma}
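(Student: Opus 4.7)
The plan is to split the integral defining $m(\mu)$ at the point $x=1$, so that
\[
m(\mu) = \int_0^1 x\,\mu(\md x) + \int_1^{\infty} x\,\mu(\md x),
\]
and to bound each piece by exactly one of the two integrability hypotheses we have on hand: the generic subordinator condition~\eqref{eq:classic-mean} for the small-jump part, and Assumption~\ref{asmp:finite-exp} for the large-jump part.

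For the first piece, I would simply observe that on $[0,1]$ we have $x = x\wedge 1$, so
\[
\int_0^1 x\,\mu(\md x) = \int_0^1 (x\wedge 1)\,\mu(\md x) \le \int_0^{\infty}(1\wedge x)\,\mu(\md x) < \infty
\]
by~\eqref{eq:classic-mean}, which holds automatically for any L\'evy measure of a subordinator under Assumption~\ref{asmp:Levy}. For the second piece, I would use the elementary inequality $\lambda_0 x \le e^{\lambda_0 x}$ valid for all $x \ge 0$, which gives $x \le \lambda_0^{-1} e^{\lambda_0 x}$ on $[1,\infty)$, and therefore
\[
\int_1^{\infty} x\,\mu(\md x) \le \frac{1}{\lambda_0}\int_1^{\infty} e^{\lambda_0 x}\,\mu(\md x) < \infty,
\]
where the finiteness is exactly~\eqref{eq:finite-exp} from Assumption~\ref{asmp:finite-exp}. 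Adding the two bounds yields $m(\mu) < \infty$.

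There is essentially no obstacle here: the statement is a routine two-piece domination argument, and the only thing worth being careful about is making sure we actually have access to~\eqref{eq:classic-mean} under the stated hypotheses, which we do because any pure jump subordinator has a L\'evy measure satisfying this standard integrability condition. The proof is therefore short enough that I would expect the appendix to present it in precisely the two-display form sketched above.
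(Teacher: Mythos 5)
Your proof is correct and follows the same route as the paper: split at $x=1$, bound the small-jump part by the subordinator integrability condition~\eqref{eq:classic-mean}, and bound the large-jump part using Assumption~\ref{asmp:finite-exp}. The only difference is that you spell out the elementary domination $x \le \lambda_0^{-1}e^{\lambda_0 x}$, which the paper leaves implicit.
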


\begin{ex}
If $\{L(t),\, t \ge 0\}$ is a compound Poisson process with jump intensity $\beta$ and distribution $B$ for each jump, then the L\'evy measure is given by $\mu(\cdot) = \beta B(\cdot)$.
\end{ex}

The following lemma can be proved by a classic argument, a version of which can be found in any textbook on stochastic analysis, see for example \cite[Section 5.2]{KS1998}. For the sake of completeness, we give the proof in the Appendix \ref{appendix:ProofLemma2}.

\begin{lemma}\label{lem:ExistenceXProcess} Under Assumptions~\ref{asmp:Lipschitz} and~\ref{asmp:Levy}, for every initial condition $X(0) = u$ there exists (in the strong sense, that is, on a given probability space) a pathwise unique version of~\eqref{eq:LevelDependentLevyDrivenRiskProcess}, driven by the given Brownian motion $W$ and L\'evy process $L$. This is a Markov process, with generator
\begin{equation}
\label{eq:generator-absorbed}
\CL f(x) := p(x)f'(x) + \frac12\si^2(x)f''(x) + \int_0^{\infty}[f(x-y) - f(x)]\,\mu(\md y)
\end{equation}
for $f \in C^2(\BR)$ with compact support.  Under Assumption~\ref{asmp:finite-exp}, this expression~\eqref{eq:generator-absorbed} is also valid for functions $f \in C^2(\BR)$ satisfying~\eqref{eq:exp-bdd} with $f(-x)$ instead of $f(x)$.
\end{lemma}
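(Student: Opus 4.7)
\textbf{Plan for Lemma~\ref{lem:ExistenceXProcess}.} The key idea for existence and uniqueness is to remove the jumps by setting $Y(t) := X(t) + L(t)$, which turns~\eqref{eq:LevelDependentLevyDrivenRiskProcess} into the continuous SDE
\begin{equation*}
Y(t) = u + \int_0^t p(Y(s) - L(s))\,\md s + \int_0^t \sigma(Y(s) - L(s))\,\md W(s)
\end{equation*}
with random time-dependent coefficients $(t,\omega,y)\mapsto p(y - L(t,\omega))$ and $(t,\omega,y)\mapsto \sigma(y - L(t,\omega))$. Under Assumptions~\ref{asmp:Lipschitz} and~\ref{asmp:Levy} these coefficients are progressively measurable and uniformly Lipschitz in $y$, so the classical Picard iteration for continuous SDEs with random Lipschitz coefficients (as in~\cite[Section 5.2]{KS1998}) yields a pathwise unique strong solution $Y$ on every finite interval, and $X := Y - L$ is the unique strong solution of~\eqref{eq:LevelDependentLevyDrivenRiskProcess}. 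The Markov property then follows from pathwise uniqueness combined with the independent increments of $L$ and its independence from $W$.

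\textbf{Generator identification.} To identify $\CL$ on compactly supported $f \in C^2(\BR)$, I would apply It\^o's formula for semimartingales with jumps to $f(X(t))$. The continuous semimartingale part of $X$ contributes the drift $p(x)f'(x) + \tfrac12\si^2(x)f''(x)$ plus a stochastic integral, while the jumps of $X$ are precisely $\{-\Delta L(s)\}_{s \ge 0}$. Integrating with respect to the Poisson random measure of $L$ (with compensator $\mu(\md y)\,\md s$) produces a local martingale plus the drift $\int_0^\infty [f(x-y) - f(x)]\,\mu(\md y)$. Absolute convergence follows from the pointwise estimate $|f(x-y) - f(x)| \le \min\bigl(2\|f\|_\infty,\, \|f'\|_\infty\cdot y\bigr)$ combined with~\eqref{eq:classic-mean}, giving exactly~\eqref{eq:generator-absorbed}. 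For the extension to $f \in C^2(\BR)$ satisfying the exponential bound with $f(-x)$ in place of $f(x)$, I would split the integration into $\{y \le x\}$ and $\{y > x\}$: on the first region $|f(x-y)-f(x)| \le C_f\, y$ is integrable by~\eqref{eq:classic-mean}, and on the second $|f(x-y) - f(x)| \le C'_f\, e^{\lambda(y-x)}$ is integrable by~\eqref{eq:finite-exp}. A truncation $f_n \uparrow f$ by multiplication with compactly supported cutoffs, combined with dominated convergence, then extends the generator identity to this larger class.

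\textbf{Main obstacle.} The only delicate issue is that $\mu$ may be unbounded near zero, so $L$ can have infinitely many small jumps on finite time intervals, which in the general semimartingale setting requires compensating small jumps when using It\^o's formula. Here this is finessed by the fact that $L$ is a subordinator, hence of finite variation, so the pathwise identity $L(t) = \sum_{s \le t}\Delta L(s)$ holds a.s.\ and the jump part of It\^o's formula is an honest Stieltjes sum. With this observation the proof reduces to standard ingredients from stochastic analysis, and the main burden is really the bookkeeping for the $\mu$-integrability of $f(x-y)-f(x)$ in the exponentially bounded case.
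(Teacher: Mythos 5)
Your proof is correct and complete, but it takes a genuinely different route from the paper. The paper (Appendices B--C) stays in the space $\mathcal E_T$ of c\`adl\`ag adapted processes with finite $\|\cdot\|_{2,T}$-norm and runs a Banach fixed-point argument: it shows the mapping $Z \mapsto \mathcal P_{\mathcal X}(Z)$, where $\mathcal P_{\mathcal X}(Z)(t) = \mathcal X(t) + \int_0^t p(Z(s))\,\md s + \int_0^t\si(Z(s))\,\md W(s)$ and $\mathcal X(t) = u - L(t)$, is $C_T$-Lipschitz (Lemma~\ref{lemma:Lipschitz-general}) and contractive for small $T$; existence and uniqueness on $[0,T]$ follow, after which the solution is glued across $[T,2T]$, $[2T,3T]$, etc., and the generator is declared to follow from ``straightforward application of It\^o's formula.'' Your substitution $Y = X + L$ converts the jump SDE into a \emph{continuous} one with random progressively measurable Lipschitz coefficients, so the c\`adl\`ag machinery is avoided entirely and one can cite the classical Picard scheme directly. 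Indeed, the two routes are essentially conjugate: Picard iteration for your $Y$-equation, transported back by $X_n = Y_n - L$, reproduces the paper's iteration $X_{n+1} = \mathcal P_{\mathcal X}(X_n)$. The paper's formulation buys uniformity with the reflected case (where the identity map is replaced by the Skorohod map $\mathcal S$), whereas your version is more elementary when no reflection is present.

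Two small cautions, neither of which is a genuine gap and both of which the paper also glosses over. First, to get $L^2$-contraction one needs $\mathcal X \in \mathcal E_T$, i.e.\ $\ME L(T)^2 < \infty$; under Assumptions~\ref{asmp:Lipschitz}--\ref{asmp:Levy} alone this may fail, so a localization or jump-truncation step is strictly needed by both arguments, even though neither spells it out. Second, in the split for the exponentially bounded case, your estimate $|f(x-y) - f(x)| \le C'_f\,e^{\la(y-x)}$ on $\{y > x\}$ is not $\mu$-integrable near $y = 0$ when $x = 0$ (and more generally, \eqref{eq:finite-exp} only controls the tail from $1$); one should split at $1 \vee x$ rather than at $x$, using the Lipschitz bound $|f(x-y)-f(x)| \le C_f\,y$ on $\{y \le 1 \vee x\}$ with \eqref{eq:classic-mean} and the exponential bound on $\{y > 1 \vee x\}$ with \eqref{eq:finite-exp}. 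Your generator identification is otherwise more detailed and careful than the paper's.
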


%\begin{proof} For Lipschitz continuous $p$, this In this case, we actually have strong existence and pathwise uniqueness. For general locally bounded $p$, this follows from uniqueness for $p = 0$ and removal of drift using the Girsanov theorem. We also need to prove that the process is {\it conservative:} That is, explosion time is a.s. equal to infinity. This can be done by estimating $\ME\sup_{0 \le s \le t}X^2(s)$ using martingale inequalities and sublinear growth of $p$. We omit details of the proof, since they are very similar to standard classical arguments.
%\end{proof}

%\begin{asmp} The process $X$ is {\it conservative:} It does not explode, and is well-defined for all $t \ge 0$.
%\label{asmp:conservative}
%\end{asmp}
%
%Assumption~\ref{asmp:conservative} holds, for example, when the drift $p$ is uniformly bounded, or uniformly Lipschitz.

%\smallskip

Define the {\it ruin probability} in finite and infinite time horizons as in~\eqref{eq:ruin-finite-horizon} and~\eqref{eq:ruin-infinite-horizon}. We are interested in finding an estimate of the form
$$
0 \le \psi(u) - \psi(u, T) \le Ce^{-kT},\ u, T \ge 0,
$$
for some constants $C,\,k > 0$. Recall the concept of {\it Siegmund duality}.
\begin{definition} Two Markov processes $X = (X(t),\, t \ge 0)$ and $Y = (Y(t),\, t \ge 0)$ on $\BR_+$ are called {\it Siegmund dual} if for all $t, x, y \ge 0$,
$$
\MP_x(X(t) \ge y) = \MP_y(Y(t) \le x).
$$
Here, the indices $x$ and $y$ refer to initial conditions $X(0) = x$ and $Y(0) = y$.
\end{definition}
Using Siegmund duality allow us to reduce our problem about ruin probabilities to another problem: long-term convergence to the stationary distribution of a reflected jump-diffusion $Y=\{Y(t)\text{ , }t\geq0\}$. Take some functions $p_{\ast},\, \sigma_{\ast} :\mathbb{R}_+\to\mathbb{R}$.

\begin{definition} Consider an $\BR_+$-valued process $Y = (Y(t),\, t \ge 0)$ with right-continuous trajectories with left limits, which satisfies the following SDE:
\begin{equation}
\label{eq:RSDE}
Y(t) = Y(0) + \int_0^tp_{\ast}(Y(s))\,\md s + \int_0^t\sigma_{\ast}(Y(s))\,\md W(s) + L(t) + R(t),
\end{equation}
where $R = (R(t),\, t \ge 0)$ is a nondecreasing right-continuous process with left limits, which starts from $R(0) = 0$ and can increase only when $Y(t) = 0$. Then the process $Y$ is called a {\it reflected jump-diffusion on the half-line}, with {\it drift coefficient} $p_{\ast}$, {\it diffusion coefficient} $\si_{\ast}$, and {\it driving jump process} $L$ with {\it L\'evy measure} $\mu$.
\end{definition}
The following result is the counterpart of Lemma \ref{lem:ExistenceXProcess} for the process $Y=\{Y(t)\text{ , }t\geq0\}$.
\begin{lemma}\label{lemma:ExistenceRProcess}
If $p_{\ast}$ and  $\si_{\ast}$ are Lipschitz, then for every initial condition $Y(0) = y$, there exists in the strong sense a pathwise unique version of~\eqref{eq:RSDE}. This is a Markov process with generator $\CA$, given by the formula
\begin{equation}
\label{eq:A}
\CA f(x) = p_{\ast}(x)f'(x) + \frac12\si_{\ast}^{2}(x)f''(x) + \int_0^{\infty}\left[f(x + y) - f(x)\right]\,\mu(\md y),
\end{equation}
for $f \in C^2(\BR_+)$ with compact support and with $f'(0) = 0$.
\label{lem:ExistenceRProcess}
\end{lemma}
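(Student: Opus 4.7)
The plan is to reduce~\eqref{eq:RSDE} to a fixed-point problem via the Skorokhod reflection map, mirroring the proof of Lemma~\ref{lem:ExistenceXProcess}, and then to read off the generator~\eqref{eq:A} from the It\^o--L\'evy formula. Recall that for any c\`adl\`ag path $z$ on $[0,T]$ with $z(0) \ge 0$, the one-dimensional Skorokhod problem admits the explicit solution
$$
r(t) := \sup_{0 \le s \le t}(-z(s))^+, \qquad y(t) := z(t) + r(t),
$$
and the map $z \mapsto (y, r)$ is Lipschitz in the sup norm. To solve~\eqref{eq:RSDE}, I would iterate: set $Y^{(0)} \equiv y$ and, given $Y^{(n)}$, form
$$
Z^{(n)}(t) := y + \IL_0^t p_{\ast}(Y^{(n)}(s))\,\md s + \IL_0^t \si_{\ast}(Y^{(n)}(s))\,\md W(s) + L(t),
$$
then let $(Y^{(n+1)}, R^{(n+1)})$ be its Skorokhod decomposition. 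Combining Assumption~\ref{asmp:Lipschitz}, the It\^o isometry, and the Lipschitz property of the Skorokhod map, a Gronwall estimate of the form $\ME \sup_{s \le t}|Y^{(n+1)}(s) - Y^{(n)}(s)|^2 \le C_T \IL_0^t \ME|Y^{(n)}(s) - Y^{(n-1)}(s)|^2 \md s$ on $[0, T]$ produces a pathwise unique strong solution. The Markov property is then automatic from strong pathwise uniqueness together with the independent-increments property of $W$ and $L$.

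To establish the generator formula, I would apply the It\^o--L\'evy formula to $f(Y(t))$ for $f \in C^2(\BR_+)$ of compact support with $f'(0) = 0$:
\begin{align*}
f(Y(t)) - f(y) &= \IL_0^t \Bigl[p_{\ast}(Y(s))f'(Y(s)) + \tfrac12\si_{\ast}^2(Y(s))f''(Y(s))\Bigr]\md s \\
&\quad + \IL_0^t \si_{\ast}(Y(s))f'(Y(s))\,\md W(s) + \IL_0^t f'(Y(s))\,\md R(s) \\
&\quad + \SL_{0 < s \le t}\bigl[f(Y(s-) + \Delta L(s)) - f(Y(s-))\bigr].
\end{align*}
The $\md R$-integral vanishes identically, because $R$ grows only on $\{Y = 0\}$ and $f'(0) = 0$. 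Taking expectations, the Brownian stochastic integral has zero mean, and the jump sum has expectation $\ME \IL_0^t \IL_0^{\infty}[f(Y(s) + x) - f(Y(s))]\,\mu(\md x)\,\md s$ by the L\'evy compensation formula; integrability is justified by the crude bound $|f(x+y) - f(x)| \le \norm{f'}_{\infty}(1 \wedge y) + 2\norm{f}_{\infty}\munit_{\{y > 1\}}$ together with~\eqref{eq:classic-mean} and Assumption~\ref{asmp:finite-exp}, via Lemma~\ref{lemma:tech}. Dividing by $t$, letting $t \downarrow 0$, and invoking right-continuity of $Y$ and continuity of the integrands recovers $\CA f(y)$ as in~\eqref{eq:A}.

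The main obstacle I anticipate is keeping the Skorokhod map, the L\'evy jump component, and the Brownian stochastic integral mutually compatible inside the Picard iteration: one has to verify that the reflection map is well-behaved on the resulting c\`adl\`ag semimartingales and that the jump contributions do not destroy the $L^2$ contraction estimate. Both points rest on the integrability conditions~\eqref{eq:classic-mean} and~\eqref{eq:finite-exp}. A subsidiary technical issue is that the class of test functions $f \in C^2$ with compact support and $f'(0) = 0$ must determine the process uniquely (i.e., form a core for $\CA$), but this is standard for Feller processes on the half-line with Neumann-type reflection.
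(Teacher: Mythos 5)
Your proposal is essentially the paper's proof: the paper likewise represents the reflected equation as a fixed point of $\mathcal S\circ\mathcal P_{\mathcal X}$ with $\mathcal X(t)=y+L(t)$, invokes the Lipschitz estimate of Lemma~\ref{lemma:Lipschitz-general} to get a contraction on a short interval and patches over $[T,2T],[2T,3T],\dots$, and then reads off the generator from It\^o's formula. One small overstatement on your part: for $f\in C^2(\BR_+)$ with compact support, the bound $|f(x+y)-f(x)|\le\norm{f'}_\infty(1\wedge y)+2\norm{f}_\infty\munit_{\{y>1\}}$ is integrable against $\mu$ by~\eqref{eq:classic-mean} alone, so neither Assumption~\ref{asmp:finite-exp} nor Lemma~\ref{lemma:tech} is needed here — consistent with the paper's statement of this lemma, which does not assume finite exponential moments.
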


The proof, which is similar to that of Lemma~\ref{lem:ExistenceXProcess}, is provided in the Appendix \ref{appendix:ProofLemma3}.

\smallskip

It was shown in \cite{Siegmund} that a Markov process on $\BR_+$ has a (Siegmund) dual process if and only if it is stochastically ordered.

\begin{thm} A Markov process $X$, corresponding to a transition semigroup $(P^t)_{t\geq0}$, is {\it stochastically ordered}, if and only if one of the following two conditions holds:

\smallskip

(a) the semigroup $(P^t)_{t \ge 0}$ maps bounded nondecreasing functions into bounded nondecreasing functions; that is, for every bounded nondecreasing $f : \BR_+ \to \BR$ and every $t \ge 0$, the function $P^tf$ is also bounded and nondecreasing;

\smallskip

(b) for every $t \ge 0$ and $c \ge 0$, the function $x \mapsto \MP_x(X(t) \ge c)$ is nondecreasing in $x$;

\smallskip
\label{thm:stoch-order}
% (c) for every $x_1 \ge x_2 \ge 0$, we can couple two copies $X_1$ and $X_2$ of the process $X$, starting from $X_1(0) = x_1$ and $X_2(0) = x_2$, so that a.s. for all $t \ge 0$, we have $X_1(t) \ge X_2(t)$.
\end{thm}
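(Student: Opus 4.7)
The theorem asserts a three-way equivalence between the coupling-based definition of stochastic ordering and conditions (a), (b). My plan is to prove the cycle (coupling) $\Rightarrow$ (b) $\Rightarrow$ (a) $\Rightarrow$ (coupling).

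The first two implications are short. For (coupling) $\Rightarrow$ (b): fix $y_1 \ge y_2$ and $c \ge 0$; since $Y_1(t) \ge Y_2(t)$ a.s.\ under the coupling, $\{Y_2(t) \ge c\} \subseteq \{Y_1(t) \ge c\}$, giving the desired monotonicity of $x \mapsto \MP_x(X(t) \ge c)$. For (b) $\Rightarrow$ (a), I would use the layer-cake identity
\begin{equation*}
f(x) = f(0) + \int_0^\infty \munit_{[c, \infty)}(x)\, \nu(\md c),
\end{equation*}
valid (after possibly replacing $f$ with its right-continuous modification, which changes it on at most a countable set) for any bounded nondecreasing $f: \BR_+ \to \BR$, where $\nu$ is the finite Lebesgue--Stieltjes measure associated to $f$. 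Applying $P^t$ and Fubini gives
\begin{equation*}
P^t f(x) = f(0) + \int_0^\infty \MP_x(X(t) \ge c)\, \nu(\md c),
\end{equation*}
so (b) transfers monotonicity to $P^t f$, while boundedness is automatic from $\norm{P^t} \le 1$.

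The main work is (a) $\Rightarrow$ (coupling), the construction of a pathwise monotone coupling, and this is where I expect the only real obstacle. Fix $y_1 \ge y_2$. Condition (b), equivalent to (a), says precisely that for every $s \ge 0$ and every pair $x_1 \ge x_2$ the one-step transition kernels $P^s(x_1, \cdot)$ and $P^s(x_2, \cdot)$ are stochastically ordered, so Strassen's theorem (realized concretely via the common-uniform / inverse-CDF recipe $Z_i = F_{x_i, s}^{-1}(U)$ with a single $U \sim \mathrm{Unif}[0,1]$) supplies a coupling of these two laws supported on $\{z_1 \ge z_2\}$. Iterating this coupling along a countable increasing family of dyadic time grids and invoking the Markov property yields a consistent family of finite-dimensional distributions on the ordered half-plane $\{(z_1, z_2) : z_1 \ge z_2\}$; Kolmogorov's extension theorem then produces a bivariate process $(Y_1, Y_2)$ with the correct marginal laws and $Y_1(t) \ge Y_2(t)$ a.s.\ at every dyadic $t$. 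The key subtlety is extending this dyadic-time ordering to all $t \ge 0$ simultaneously, a.s.: the Feller property from Lemma \ref{lem:ExistenceXProcess} ensures right-continuity of trajectories, and the pointwise order inequality is preserved under right-limits along any dense countable set. The classical references for this machinery are Siegmund \cite{Siegmund} and the paper of Kamae, Krengel and O'Brien, so one may alternatively cite them directly and bypass the explicit Kolmogorov extension step.
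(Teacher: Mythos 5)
The paper's own proof of Theorem~\ref{thm:stoch-order} is a one-line citation of Kamae, Krengel and O'Brien, so your self-contained attempt is doing substantially more work than the authors. Your implications (coupling) $\Rightarrow$ (b) and (b) $\Rightarrow$ (a) are correct. For the latter, the layer-cake argument is fine; one can also sidestep the issue of the right-continuous modification by observing that $\mathbf{1}_{(c,\infty)} = \lim_{c'\downarrow c}\mathbf{1}_{[c',\infty)}$, so (b) controls indicators of both closed and open half-lines, and every bounded nondecreasing $f$ is an increasing pointwise limit of nonnegative step functions built from these, after which monotone convergence finishes it.

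The genuine gap is in (a) $\Rightarrow$ (coupling). The claim that iterating the common-uniform (inverse-CDF) coupling over a nested family of dyadic grids ``yields a consistent family of finite-dimensional distributions'' is false. Restricting the coupling built on $D_{n+1}$ to the coarser grid $D_n$ does not reproduce the coupling built directly on $D_n$: with $s < r < t$, the bivariate law of $\bigl(Y_1(t), Y_2(t)\bigr)$ produced by composing two inverse-CDF steps $s\to r\to t$ differs in general from the one produced by a single step $s\to t$, even though the univariate marginals agree by Chapman--Kolmogorov. So the Kolmogorov consistency condition fails and the extension theorem cannot be invoked. The alternative of enumerating the rationals and inserting time points one at a time hits the same wall in another form: it would require the bridge kernels $\MP\bigl(Y(q)\in\cdot\mid Y(s), Y(u)\bigr)$, $s<q<u$, to be stochastically monotone jointly in the two endpoints, which is strictly stronger than (b) and does not follow from it. What actually closes the argument is the Kamae--Krengel--O'Brien machinery: one passes finite-dimensional order-couplings through a compactness/closedness argument on path space (either a tightness argument in the Skorokhod topology, or a Strassen-type argument on the countable product $\BR^D$ using that the coordinatewise order is closed and that nondecreasing cylinder functions determine stochastic order). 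Your closing hedge --- cite Kamae--Krengel--O'Brien directly and bypass the explicit construction --- is the correct move and is exactly what the paper does; as written, the explicit Kolmogorov-extension route cannot be made rigorous without that additional machinery.
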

\begin{proof}
This equivalence follows from \cite{Kamae}.
\end{proof}
Now, consider the process~\eqref{eq:LevelDependentLevyDrivenRiskProcess}, stopped at hitting $0$. The following result is well known in the literature; however, in the Appendix~\ref{appendix:ProofStochOrder} we provide a simple proof for the sake of completeness.

\begin{lemma} The process~\eqref{eq:LevelDependentLevyDrivenRiskProcess} is stochastically ordered.
\label{lemma:stoch-ordered-original}
\end{lemma}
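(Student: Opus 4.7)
The plan is to verify condition~(b) of Theorem~\ref{thm:stoch-order} via a synchronous coupling of the SDE~\eqref{eq:LevelDependentLevyDrivenRiskProcess}. Fix initial capitals $u_1 \ge u_2 \ge 0$, and on a common probability space carrying $W$ and $L$, let $X_1, X_2$ be the pathwise unique strong solutions furnished by Lemma~\ref{lem:ExistenceXProcess}, driven by the same pair $(W, L)$ and started from $u_1, u_2$ respectively. Establishing $X_1(t) \ge X_2(t)$ almost surely for all $t \ge 0$ immediately yields $\MP_{u_1}(X(t) \ge c) \ge \MP_{u_2}(X(t) \ge c)$ for every $c$, which is exactly Theorem~\ref{thm:stoch-order}(b); and since the hitting times of $0$ then satisfy $\tau_1 \ge \tau_2$ a.s., the ordering is inherited by the corresponding processes stopped at ruin.

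The structural observation enabling the comparison is that $L$ enters both SDEs with the common state-independent coefficient $-1$, so the jump parts cancel exactly in $D(t) := X_1(t) - X_2(t)$. Hence $D$ is a \emph{continuous} semimartingale with $D(0) = u_1 - u_2 \ge 0$ and
\[
D(t) = (u_1 - u_2) + \int_0^t \bigl[p(X_1(s)) - p(X_2(s))\bigr]\,\md s + \int_0^t \bigl[\si(X_1(s)) - \si(X_2(s))\bigr]\,\md W(s).
\]
I would then run the classical Yamada--Watanabe comparison: pick smooth concave approximations $\psi_n$ of $x \mapsto x^+$ with $\psi_n'' \ge 0$ supported in $(0, a_n)$ for some $a_n \downarrow 0$ and satisfying $\psi_n''(x) \le \tfrac{2}{x \log n}$, apply It\^o's formula to $\psi_n(-D(t))$, and take expectations (the stochastic integral is a true martingale because $D$ is continuous and $X_1, X_2$ are locally bounded). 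The Lipschitz bound on $p$ from Assumption~\ref{asmp:Lipschitz} controls the drift contribution by $K \int_0^t \ME[D(s)^-]\,\md s$; the Lipschitz bound on $\si$ combined with the bound on $\psi_n''$ controls the quadratic-variation contribution by $K^2 a_n t / \log n$, which vanishes as $n \to \infty$. Passing to the limit yields $\ME[D(t)^-] \le K \int_0^t \ME[D(s)^-]\,\md s$, and Gr\"onwall's lemma forces $D(t)^- = 0$ almost surely.

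The main obstacle --- and essentially the only substantive step --- is controlling that second-order contribution, which is precisely the purpose of the Yamada--Watanabe rate $\tfrac{2}{x \log n}$; this relies crucially on the Lipschitz continuity of $\si$ from Assumption~\ref{asmp:Lipschitz}. The synchronous coupling of $L$ is what reduces the jump SDE to a continuous comparison problem in the first place: monotonicity of the jump map $x \mapsto x - \Delta L$ is automatic here because the jump size is state-independent, so no additional argument is required to handle the jumps, and the final conclusion reduces to the standard Lipschitz comparison theorem.
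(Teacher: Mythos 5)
Your proof is correct, but it takes a genuinely different route from the paper's. You make the same synchronous coupling (same $W$ and $L$), observe that the L\'evy increments enter both equations with the common state-independent coefficient $-1$, so that $D = X_1 - X_2$ is a \emph{continuous} semimartingale, and then prove $D \ge 0$ by the Yamada--Watanabe regularization of $x \mapsto x^+$, It\^o's formula, and Gr\"onwall. The paper instead uses a short pathwise ``no-crossing'' argument: letting $\tau = \inf\{t : X_1(t) < X_2(t)\}$, it notes that $X_1(\tau-) \ge X_2(\tau-)$ while $X_1(\tau) \le X_2(\tau)$; the case $X_1(\tau) = X_2(\tau)$ is excluded because pathwise uniqueness (Lemma~\ref{lem:ExistenceXProcess}) would force $X_1 \equiv X_2$ thereafter, and the case $X_1(\tau) < X_2(\tau)$ would make $\tau$ a jump time, which is impossible since the jump displacement $-\Delta L(\tau)$ is identical for both processes. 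The paper's argument is shorter and avoids any stochastic-calculus estimates; yours is the standard quantitative comparison theorem and makes the role of the Lipschitz constant $K$ explicit. Both exploit the same two structural facts --- state-independent jump coefficient and a continuous one-dimensional diffusion part with Lipschitz coefficients --- and both correctly observe (you explicitly, the paper implicitly) that the ordering passes to the processes stopped at zero. A few small slips worth fixing in your writeup: since $\psi_n'' \ge 0$, the functions $\psi_n$ are convex, not concave; in the standard Yamada--Watanabe construction the support of $\psi_n''$ must be bounded away from $0$ (typically an interval $(a_n, a_{n-1})$ with $a_n \downarrow 0$), because $\int_0^{a_n} \frac{2\,\md x}{x \log n} = \infty$ would otherwise make $\psi_n'$ unbounded; and the stochastic integral is a true martingale because $\si$ is \emph{bounded} by Assumption~\ref{asmp:Lipschitz} and $\psi_n'$ is bounded, not because $X_1, X_2$ are locally bounded.
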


It was first shown in \cite[p.210]{Levy} that absorbed and reflected Brownian motions on $\BR_+$ are Siegmund dual. Since then, several more papers dealt with duality for more general processes, including jump-diffusions in \cite{Kol}. In particular, we have the following result.
\begin{lemma} Under Assumptions~\ref{asmp:Lipschitz} and~\ref{asmp:Levy}, the Siegmund dual process for the jump-diffusion~\eqref{eq:LevelDependentLevyDrivenRiskProcess}, absorbed at zero, is the reflected jump-diffusion on $\BR_+$ from~\eqref{eq:RSDE}, starting at $Y(0)=0$, with drift  and diffusion coefficients
\begin{equation}
\label{eq:new-drift}
p_{\ast}(x) = -p(x) - \si(x)\si'(x),
\end{equation}
\begin{equation}
\label{eq:new-difusion}
\si_{\ast}(x) = \si(x).
\end{equation}
\end{lemma}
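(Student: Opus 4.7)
The strategy is to establish a generator-level duality and invoke uniqueness of the corresponding backward equations. Let $H(x,y) := \mathbf{1}(x \ge y)$, and set $u(t,x,y) := \MP_x(X(t) \ge y)$, $v(t,x,y) := \MP_y(Y(t) \le x)$. Both satisfy $u(0,\cdot,\cdot) = v(0,\cdot,\cdot) = H$ and evolve via $\partial_t u = \CL_x u$, $\partial_t v = \CA_y v$ (with generators from Lemmas~\ref{lem:ExistenceXProcess} and~\ref{lem:ExistenceRProcess}), subject to absorption of $X$ at $x = 0$ and reflection of $Y$ at $y = 0$. To show $u \equiv v$, I would use the interpolation $F(s) := \ME H(X(t-s), Y(s))$, $s \in [0, t]$, with independent copies of $X$ and $Y$. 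Then $F(0) = u(t,x,y)$, $F(t) = v(t,x,y)$, so the identity $u = v$ follows once we verify $F'(s) \equiv 0$, which by It\^o's formula is equivalent to the generator duality
$$
(\CL_x H)(x,y) = (\CA_y H)(x,y)
$$
in an appropriate weak sense, together with compatibility of the boundary data.

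\textbf{Verification of the generator identity.} Since $H$ is not $C^2$, replace it by a smooth nondecreasing approximation $H_\epsilon(x,y) = h_\epsilon(x-y)$ with $h_\epsilon \to \mathbf{1}_{[0,\infty)}$ as $\epsilon \downarrow 0$. The chain rule gives $\partial_x H_\epsilon = h_\epsilon'(x-y) = -\partial_y H_\epsilon$ and $\partial_x^2 H_\epsilon = \partial_y^2 H_\epsilon = h_\epsilon''(x-y)$. The jump parts of $\CL_x H_\epsilon$ and $\CA_y H_\epsilon$ match identically via the change of variable $\mathbf{1}(x - z \ge y) = \mathbf{1}(x \ge y+z)$, so the only nontrivial contribution is
$$
(\CL_x - \CA_y) H_\epsilon(x,y) = [p(x) + p_\ast(y)]\, h_\epsilon'(x-y) + \tfrac{1}{2}[\sigma^2(x) - \sigma_\ast^2(y)]\,h_\epsilon''(x-y).
$$
Inserting this into the interpolation formula $F_\epsilon(0) - F_\epsilon(t) = \int_0^t \ME[(\CL_x - \CA_y)H_\epsilon(X(t-s), Y(s))]\, ds$, the first term concentrates on the diagonal $\{x=y\}$, forcing $p_\ast(y) = -p(y)$ at leading order; the second, after a single integration by parts that shifts a derivative onto $\sigma^2(x) - \sigma_\ast^2(y)$, contributes an Ito-type correction of order $\sigma(y)\sigma'(y)$ which, combined with the requirement $\sigma_\ast = \sigma$ from the $h_\epsilon''$ coefficient, produces the stated formulas~\eqref{eq:new-drift}, \eqref{eq:new-difusion}.

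\textbf{Boundary compatibility.} At $x = 0$, absorption of $X$ gives $\MP_0(X(t) \ge y) = 0$ for $y > 0$, while at $y = 0$ the reflection of $Y$ (together with $Y \ge 0$) gives $\MP_y(Y(t) < 0) = 0$ and a vanishing Neumann-type flux across $y = 0$. These conditions ensure that the boundary contributions arising in the mollification argument vanish as $\epsilon \downarrow 0$.

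\textbf{Main obstacle.} The principal difficulty is the careful distributional analysis along the diagonal, in particular the justification that the $h_\epsilon''$ term—whose naive pointwise limit vanishes in view of $\sigma_\ast = \sigma$—contributes precisely the $\sigma\sigma'$ correction through the integration by parts, rather than a vanishing or doubled amount. An alternative route that bypasses this bookkeeping is to verify that our process fits the hypotheses of Kolokoltsov's general duality theorem~\cite{Kol} for jump-diffusions under Assumptions~\ref{asmp:Lipschitz}, \ref{asmp:Levy}, and~\ref{asmp:finite-exp}, and read off the coefficients~\eqref{eq:new-drift}--\eqref{eq:new-difusion} directly.
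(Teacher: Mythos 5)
Your mollification-and-interpolation strategy is a legitimate way to attack this lemma, and it is genuinely different from (and more detailed than) what the paper actually does: the paper's entire proof is the one-line citation of Kolokoltsov's Proposition 3.1. In fact, the ``alternative route'' you sketch at the end \emph{is} the paper's proof. So the substance of the review concerns your main argument, the generator duality via mollification.

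The gap is located precisely at the step you yourself call ``the principal difficulty'': you never actually perform the integration by parts, you merely assert that it ``produces the stated formulas.'' But that sign is the whole content of the drift correction, and if one does the computation carefully the answer does not match the lemma as printed. With $H_\epsilon(x,y) = h_\epsilon(x-y)$, the jump parts cancel, and
\[
(\CL_x - \CA_y)H_\epsilon = [p(x) + p_*(y)]\,h_\epsilon'(x-y) + \tfrac12\,[\sigma^2(x) - \sigma_*^2(y)]\,h_\epsilon''(x-y).
\]
Pairing against a test function $\phi(x,y)$, substituting $z=x-y$ and letting $\epsilon\downarrow 0$ (so $h_\epsilon'\to\delta$, $h_\epsilon''\to\delta'$), the first term contributes $\int [p(y)+p_*(y)]\,\phi(y,y)\,\md y$. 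Matching the $\delta'$ coefficient forces $\sigma_*=\sigma$, and then the second term contributes
\[
-\int \tfrac12\,\partial_z\!\left.\Bigl[\bigl(\sigma^2(y+z)-\sigma^2(y)\bigr)\phi(y+z,y)\Bigr]\right|_{z=0}\md y = -\int \sigma(y)\sigma'(y)\,\phi(y,y)\,\md y.
\]
Hence the diagonal density must satisfy $p(y)+p_*(y)-\sigma(y)\sigma'(y)=0$, i.e.\ $p_*(y) = -p(y) + \sigma(y)\sigma'(y)$, with a \emph{plus} sign in front of $\sigma\sigma'$ rather than the minus sign appearing in~\eqref{eq:new-drift}. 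A direct check with geometric Brownian motion confirms this: for $\md X = \sigma_0 X\,\md W$ (so $p\equiv 0$, $\sigma(x)=\sigma_0 x$, $\sigma\sigma'=\sigma_0^2 x$) one computes $X(t)=x\,e^{\sigma_0 W(t)-\sigma_0^2 t/2}$, and the process $Y(t)=y\,e^{\sigma_0 W(t)+\sigma_0^2 t/2}$ satisfies $\MP_x(X(t)\ge y)=\MP_y(Y(t)\le x)$ by symmetry of $W$; this $Y$ solves $\md Y = \sigma_0^2 Y\,\md t + \sigma_0 Y\,\md W$, with drift $+\sigma\sigma'$, not $-\sigma\sigma'$. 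So either a sign flip is hiding in your mollification that you have not written down, or the lemma itself carries a typographical sign error (harmless for the numerical examples in Section~\ref{sec:ComputationConvergenceRate}, which all take $\sigma$ constant so that $\sigma\sigma'\equiv 0$). Either way, the assertion that your calculation ``produces the stated formulas'' is unsubstantiated until you carry out the integration by parts explicitly and resolve the sign.
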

\begin{proof}
The result is a direct application of \cite[Proposition 3.1]{Kol}
\end{proof}

We have shown that under Assumptions \ref{asmp:Lipschitz}, \ref{asmp:Levy} and \ref{asmp:finite-exp}, the wealth process is a stochastically ordered Markov process that admits as a Siegmund dual process a Markov process defined as a reflected jump-diffusion process. Therefore, the rate of convergence for ruin probabilities is determined by studying the one of its associated dual process $Y=\{Y(t)\text{ , }t\geq0\}$.

\section{Main results}

%which is a Markov process with generator
%\begin{equation}
%\CA f(x) = p_{\ast}(x)f'(x) + \frac12\si^2(x)f''(x) + \int_0^{\infty}\left[f(x + y) - f(x)\right]\,\mu(\md y),
%\end{equation}
%where
%\begin{equation}
%\label{eq:new-drift}
%p_{\ast}(x) = -p(x) + \si(x)\si'(x),
%\end{equation}

A common method to prove an exponential rate of convergence toward the stationary distribution is to construct a Lyapunov function.
\begin{definition}
Let $V:\mathbb{R}_+\to[1,\infty)$ be a continuous function and assume there exists $b,k,z>0$ such that
\begin{equation}\label{eq:LyapunovFunction}
\CA V(x)\leq-kV(x)+b{1}_{[0,z]}(x),\text{ }x\in\mathbb{R}_+.
\end{equation}
then $V$ is called a \textit{Lyapunov function}.
\end{definition}
We shall build a Lyapunov function for the Markov process $Y$ in the form $V_{\lambda}(x)=e^{\lambda x}$, for $\lambda>0$. This choice appears to be suitable to tackle the rate of convergence problem of reflected jump-diffusions process as the generator acts on it in a simple way. Under Assumption~\ref{asmp:finite-exp}, consider the function
$$
\phi(\la, x) := p_*(x)\la + \frac12\si^2(x)\la^2 + \kappa(\la),\ \la \in[0,\lambda_0),\ x \in \BR.
$$

For a signed measure $\nu$ on $\mathbb{R}_+$ and a function $f:\mathbb{R}_+\to\mathbb{R}$, we denote by $(\nu, f)=\int f\text{d}\nu$. Additionally, for a function $f:\mathbb{R}_+\to\left[1,+\infty\right)$, define the following norm: $\norm{\nu}_f:=\sup_{|g| \le f}|(\nu,g)|$. If $f\equiv1$, then $\norm{\cdot}_f$ is the total variation norm.  Define
\begin{equation}
\label{eq:Phi-definition}
\Phi(\lambda)=\inf\limits_{x \ge 0}(-\phi(\la, x)) = -\sup\limits_{x \ge 0}\phi(\la, x).
\end{equation}

\begin{thm}
\label{thm:main-1}
 Under Assumptions~\ref{asmp:Lipschitz},~\ref{asmp:Levy},~\ref{asmp:finite-exp}, suppose
\begin{equation}
\label{eq:ergodic}
\Phi(\lambda) > 0\ \mbox{for some}\ \la \in (0, \la_0).
\end{equation}
Then there exists a unique stationary distribution $\pi$ for the reflected jump-diffusion $Y$. Take a $\la \in (0, \la_0)$ such that $k = \Phi(\lambda) > 0$. This stationary distribution satisfies $(\pi, V_{\la})< \infty$. The transition function $Q^t(x, \cdot)$ of the process $Y$ satisfies
\begin{equation}
\label{eq:uniform-ergodicity}
\norm{Q^t(x, \cdot) - \pi(\cdot)}_{V_\lambda} \le \left[V_{\lambda}(x) + (\pi,V_{\lambda})\right]e^{-kt}.
\end{equation}
\end{thm}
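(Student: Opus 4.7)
The plan is to combine the Lyapunov drift method with a synchronous coupling based on stochastic ordering, in the spirit of Lund--Meyn--Tweedie \cite{LMT1996} and Sarantsev \cite{MyOwn12}. The drift estimate forces $V_\lambda(Y(t))$ to contract exponentially along interior trajectories, and stochastic ordering supplies a coalescence time whose tail is controlled by the very same bound, so no separate minorization argument is needed. The main obstacle is that $V_\lambda'(0)=\lambda>0$, hence $V_\lambda$ is not in the domain of $\CA$ and a naive It\^o expansion on the full path produces a spurious $\lambda\,dR(t)$ term that destroys the contraction; the resolution is to run the supermartingale only up to the coalescence time, where the reflection never activates.

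Applying $\CA$ from~\eqref{eq:A}, extended via~\eqref{eq:exp-bdd}, to $V_\lambda(x)=e^{\lambda x}$ gives immediately
\[
\CA V_\lambda(x)=\Bigl[p_{\ast}(x)\lambda+\tfrac12\sigma_{\ast}^2(x)\lambda^2+\kappa(\lambda)\Bigr]V_\lambda(x)=\phi(\lambda,x)\,V_\lambda(x)\le -k\,V_\lambda(x).
\]
For $x\ge y\ge 0$, pathwise uniqueness of~\eqref{eq:RSDE} with shared noise $(W,L)$ (Lemma~\ref{lem:ExistenceRProcess}) produces a monotone synchronous coupling satisfying $Y_x(t)\ge Y_y(t)$ for all $t\ge 0$, which is the reflected analogue of Lemma~\ref{lemma:stoch-ordered-original} and is proved by the same Gronwall-type argument. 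Let $T:=\inf\{t\ge 0:Y_x(t)=0\}$. Since $L$ jumps only upward, $Y_x$ can reach $0$ only continuously, so $Y_x(T)=0$; domination forces $Y_y(T)=0$, and pathwise uniqueness then glues the two paths for $t\ge T$, making $T$ a genuine coalescence time.

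On $\{T>t\}$ the upper path $Y_x$ lives in $(0,\infty)$, hence $R$ is flat there and It\^o's formula applies with no boundary correction. Combined with the drift bound above, a routine localization delivers
\[
\ME_x\bigl[e^{k(t\wedge T)}V_\lambda(Y_x(t\wedge T))\bigr]\le V_\lambda(x),
\]
which, since $V_\lambda\ge 1$, yields $\MP_x(T>t)\le V_\lambda(x)e^{-kt}$ and $\ME_x[V_\lambda(Y_x(t))\mathbf 1_{\{T>t\}}]\le V_\lambda(x)e^{-kt}$. For any $g$ with $|g|\le V_\lambda$ and $x\ge y$, monotonicity ($V_\lambda(Y_y(t))\le V_\lambda(Y_x(t))$ on $\{T>t\}$) gives the two-point estimate
\[
\bigl|\ME g(Y_x(t))-\ME g(Y_y(t))\bigr|\le \ME\bigl[(V_\lambda(Y_x(t))+V_\lambda(Y_y(t)))\mathbf 1_{\{T>t\}}\bigr]\lesssim V_\lambda(x)e^{-kt}.
\]

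The stationary distribution $\pi$ is then obtained as a $V_\lambda$-Cauchy limit of $Q^t(0,\cdot)$. A uniform-in-$t$ bound on $\ME_0 V_\lambda(Y(t))$ --- which one gets by running the drift estimate against an auxiliary Lyapunov function satisfying the Neumann condition $f'(0)=0$, e.g.\ $f(x)=\int_0^x(V_\lambda(u)-1)\,du$, so the reflection contribution is absorbed in closed form --- combined with the two-point bound makes $\{Q^t(0,\cdot)\}_t$ Cauchy; the limit $\pi$ is invariant by the Markov property, unique by the same coupling, and satisfies $(\pi,V_\lambda)<\infty$ by Fatou. Finally, writing $\pi=\int Q^t(y,\cdot)\,\pi(dy)$, integrating the two-point bound against $\pi$, and splitting the integration at $y=x$ so that in each half the upper starting point dominates, yields the claimed
\[
\norm{Q^t(x,\cdot)-\pi}_{V_\lambda}\le [V_\lambda(x)+(\pi,V_\lambda)]e^{-kt}.
\]
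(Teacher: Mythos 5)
Your approach is genuinely different from the paper's. The paper never attempts a direct coupling argument on the original (possibly degenerate) process. Instead, it regularizes: it introduces $Y_\eps$ with strictly positive diffusion coefficient $\si_\eps = \si + \eps$ and truncated (hence finite) L\'evy measure $\mu_\eps = \mu(\cdot\cap[\eps,\eps^{-1}])$, applies \cite[Theorem 4.3]{MyOwn12} as a black box to $Y_\eps$ to get the exact inequality~\eqref{eq:uniform-ergodicity} with $\Phi_\eps(\la)$ and $\pi_\eps$, and then spends the remainder of the proof (Lemmas 5.1--5.5 and Lemma~\ref{lemma:fundamental}) establishing $\Phi_\eps(\la)\to\Phi(\la)$, $\varlimsup_\eps(\pi_\eps,V_\la)<\infty$, tightness and weak convergence $\pi_{\eps_n}\Rightarrow\pi_0$, and $L^2$-convergence of the approximating trajectories, so it can pass to the limit $\eps\downarrow 0$ in the inequality. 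You instead run the coalescence/supermartingale argument directly on $Y$, stopping $V_\la(Y_x)$ at the hitting time of zero so the reflection term never activates. That is essentially a re-derivation of the content of \cite[Theorem 4.3]{MyOwn12} for the degenerate process, and if carried out rigorously it would indeed sidestep the paper's regularization: the reason the authors give for approximating is precisely that the positive-density condition $Q^t(x,A)>0$ used in \cite[Theorem 3.2]{MyOwn12} can fail when $\si$ vanishes, whereas your coalescence argument does not invoke that condition.

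There are, however, two genuine gaps. First, your two-point bound loses a factor of $2$: from the supermartingale you control $\ME[V_\la(Y_x(t))\mathbf 1_{\{T>t\}}]\le V_\la(x)e^{-kt}$, and on $\{T>t\}$ you dominate $V_\la(Y_y(t))\le V_\la(Y_x(t))$, so the right-hand side of your two-point estimate is $2V_\la(x)e^{-kt}$ (your ``$\lesssim$'' hides this). After integrating against $\pi$ and splitting at $y=x$ you get $2[V_\la(x)+(\pi,V_\la)]e^{-kt}$, not the claimed $[V_\la(x)+(\pi,V_\la)]e^{-kt}$; obtaining the sharper constant requires bounding $\ME[V_\la(Y_y(t))\mathbf 1_{\{T>t\}}]$ by $V_\la(y)e^{-kt}$ rather than $V_\la(x)e^{-kt}$, and that is not immediate once the lower path $Y_y$ has already reflected before $T=T_x$. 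Second, the phrase ``routine localization'' conceals exactly the technical work that forced the paper's approximation scheme: with an infinite L\'evy measure and possibly vanishing $\si$ you must justify the It\^o/L\'evy decomposition of $e^{k(t\wedge T)}V_\la(Y_x(t\wedge T))$, verify that the compensated jump integral is a genuine local martingale (the compensator equals $\kappa(\la)\int_0^{t\wedge T}e^{ks}V_\la(Y_x(s-))\,ds$, finite a.s.\ but not obviously integrable), and argue a nonnegative-local-supermartingale--is--supermartingale step. These are fixable, but they are not routine, and they are the very obstacles whose avoidance drives the whole structure of the paper's Section~5; a complete blind proof should address them explicitly rather than defer to ``routine.''
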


The proof of Theorem~\ref{thm:main-1} is postponed until Section~\ref{sec:ProofMainThm}. The central result of this paper is a corollary of Theorem \ref{thm:main-1}, direct consequence of the duality link established between the processes $X$ and $Y$.
\begin{corollary} Under Assumptions~\ref{asmp:Lipschitz},~\ref{asmp:Levy}, ~\ref{asmp:finite-exp}, and the condition~\eqref{eq:ergodic},
\begin{equation}\label{eq:CorConvergenceRateRuinProbabilities}
0 \le \psi(u) - \psi(u, T) \le \left[1 + (\pi, V_{\la})\right]e^{-kT},\quad u, T \ge 0.
\end{equation}
\label{cor:main-1}
\end{corollary}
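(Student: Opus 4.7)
The corollary asserts two inequalities; I would dispatch them separately. The lower bound $\psi(u,T) \le \psi(u)$ is essentially tautological: since $\{\inf_{0\le t\le T}X(t)\le 0\}\subseteq\{\inf_{t\ge 0}X(t)\le 0\}$, it is just monotonicity of probability on nested events. So the real content is the exponential upper bound, and the whole strategy is to translate it, via Siegmund duality, into a statement about $Y$ that Theorem~\ref{thm:main-1} directly controls.

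For the upper bound, the plan is to invoke the two duality identifications recalled in the introduction, namely $\psi(u,T)=\MP(Y(T)\ge u)$ and $\psi(u)=\MP(Y(\infty)\ge u)$, where $Y$ is the reflected jump-diffusion from~\eqref{eq:RSDE} started at $Y(0)=0$ (as specified in the duality lemma). Under the assumptions in force, Theorem~\ref{thm:main-1} guarantees the existence and uniqueness of the stationary distribution $\pi$ for $Y$, with $Y(\infty)\sim\pi$, and so the difference of ruin probabilities rewrites as
\begin{equation*}
\psi(u)-\psi(u,T) \;=\; \pi([u,\infty)) - Q^T(0,[u,\infty)) \;=\; \bigl(\pi - Q^T(0,\cdot),\, \mathbf{1}_{[u,\infty)}\bigr).
\end{equation*}

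The final step is to estimate this quantity using the weighted total variation bound~\eqref{eq:uniform-ergodicity}. The test function $g=\mathbf{1}_{[u,\infty)}$ satisfies $|g|\le 1\le V_{\la}$, so by the definition of $\|\cdot\|_{V_{\la}}$ we have $|(\nu,g)|\le\|\nu\|_{V_{\la}}$ for any signed measure $\nu$. Applying this with $\nu=Q^T(0,\cdot)-\pi(\cdot)$ and using~\eqref{eq:uniform-ergodicity} at $x=0$ gives
\begin{equation*}
|\psi(u)-\psi(u,T)| \;\le\; \|Q^T(0,\cdot)-\pi(\cdot)\|_{V_{\la}} \;\le\; [V_{\la}(0)+(\pi,V_{\la})]\,e^{-kT} \;=\; [1+(\pi,V_{\la})]\,e^{-kT},
\end{equation*}
since $V_{\la}(0)=e^{0}=1$. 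Combined with the trivial lower bound, this is exactly~\eqref{eq:CorConvergenceRateRuinProbabilities}.

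There is no genuine obstacle left once Theorem~\ref{thm:main-1} is in hand; the proof is essentially a one-line application. The only mildly delicate point is bookkeeping: one must match the correct dual process (reflected on $\BR_+$, started from $0$) to the absorbed version of~\eqref{eq:LevelDependentLevyDrivenRiskProcess}, and then use that $V_{\la}(0)=1$ to get the clean constant $1+(\pi,V_{\la})$ rather than a generic $V_{\la}(x)+(\pi,V_{\la})$. The finiteness $(\pi,V_{\la})<\infty$, needed to make the right-hand side meaningful, is also supplied by Theorem~\ref{thm:main-1}.
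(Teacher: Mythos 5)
Your proof is correct and follows essentially the same route as the paper: both translate the difference of ruin probabilities via Siegmund duality into $\pi([u,\infty)) - Q^T(0,[u,\infty))$ and then invoke the $V_\lambda$-weighted ergodicity bound~\eqref{eq:uniform-ergodicity} from Theorem~\ref{thm:main-1} at $x=0$, using $V_\lambda(0)=1$. You merely spell out the last step (bounding the indicator by $V_\lambda$) and the trivial monotonicity lower bound, which the paper leaves implicit.
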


\begin{proof}
In virtue of Siegmund duality we have that
\begin{equation}\label{eq:DualityInCor}
 \psi(u) - \psi(u, T)=\MP(Y(\infty) \ge u) - \MP(Y(T) \ge u),
\end{equation}
where $Y=\left(Y(t)\text{ , }t\geq0\right)$ is a reflected jump-diffusion on $\mathbb{R}_{+}$, starting at $Y(0)=0$, and $Y(\infty)$ is a random variable distributed as $\pi$. We may rewrite \eqref{eq:DualityInCor} as
\begin{equation*}
 \psi(u) - \psi(u, T)=\pi\left(\left[u,\infty\right)\right) - Q^{T}(0,\left[u,\infty\right)).
\end{equation*}
Then the inequality \eqref{eq:CorConvergenceRateRuinProbabilities} follows immediately from the application of Theorem \ref{thm:main-1}.
\end{proof}

In the space-homogeneous case: $p(x) \equiv p$ and $\si(x) \equiv \si$, the quantity
 $\phi(\la, x)$ is independent of $x$, and condition~\eqref{eq:ergodic} means that there exists a $\la > 0$ such that $\phi(\la) < 0$. Then $p_{\ast} = p$, and
$$
\phi'(0) = -p + \psi'(0) = -p + m(\mu).
$$
It is easy to show that $\phi(\cdot)$ is a convex function with $\phi(0) = 0$. Therefore, condition~\eqref{eq:ergodic} holds if and only if $\phi'(0) < 0$, or, equivalently,
\begin{equation}
\label{eq:ergodic-hom}
p > m(\mu).
\end{equation}

\section{Explicit rate of exponential convergence calculation}\label{sec:ComputationConvergenceRate}
In this section, we aim at studying the rate $k$ of exponential convergence depending on the parameters of the risk model.
\subsection{Compound Poisson risk model perturbed by a diffusion}
In this subsection, the risk process $X=(X(t)\text{ , }t\geq0)$ is defined as
\begin{equation}
X(t)=u+pt+\sigma W(t)-\sum_{k=1}^{N(t)}U_k,
\end{equation}
where $u\geq0$ denotes the initial capital and $p$ corresponds to the premium rate. The process $W=(W(t)\text{ , }t\geq0)$ is a standard Brownian motion allowing to capture the volatility around the premium rate encapsulated in the parameter $\sigma>0$. The process $N=(N(t)\text{ , }t\geq0)$ is a homogeneous Poisson process with intensity $\beta>0$, independent from the claim sizes $U_1,U_2,\ldots$ which are \textbf{i.i.d.}  with distribution function $B$.
The premium rate satisfies the {\it net benefit condition:} $p=(1+\eta)\beta\mathbb{E}(U)$, where $\eta>0$ is {\it safety loading.}

\smallskip

We can study the rate of exponential convergence of ruin probabilities; specifically, how it depends on the parameters of the model: (a) the diffusion coefficient $\sigma$ in front of the perturbation term; (b) the safety loading $\eta$; (c) the shape of the claim size distribution. The function $\phi(\la, x)$ for this risk process is given by
$$
\phi(\la, x) = -p\la + \frac12\si^2\la^2 +  \beta\left[\widehat{B}(\lambda)-1\right],\ \la \ge 0,\ x \in \BR,
$$
where $\widehat{B}(\lambda)=\mathbb{E}(e^{\lambda U})$ denotes the moment generating function (MGF) of the claim amounts distribution. As the expression of $\phi(\la, x)$ actually does not depend on $x$ then
$$
\inf\limits_{x \ge 0}(-\phi(\la, x))=\Phi(\lambda)= p\la - \frac12\si^2\la^2 -  \beta\left[\widehat{B}(\lambda)-1\right],\ \la \ge 0,\ x \in \BR.
$$
The rate of exponential convergence follows from
$$
k=\underset{\{\la\geq0\text{ ; }\widehat{B}(\lambda)<\infty\}}{\text{max}}\,\Phi(\lambda).
$$
The function $\Phi(.)$ is strictly concave as
\begin{equation*}
\Phi''(\lambda)=-\sigma^{2}-\beta\widehat{B}''(\lambda)<0\text{ for all }\la\geq0.
\end{equation*}
It follows that
\begin{equation}\label{eq:LambdaStar}
\la_{\ast}:=\underset{\{\la\geq0\text{ ; }\widehat{B}(\lambda)<\infty\}}{\text{argmax}}\,\Phi(\lambda)
\end{equation}
is solution of the equation
\begin{equation*}
p-\sigma^{2}\la-\beta\widehat{B}'(\lambda)=0,
\end{equation*}
under the constraint $\la^{\ast}\in\{\lambda\geq0\text{ ; }\widehat{B}(\lambda)<\infty\}$. The rate of exponential convergence is then given by
$$
k=\Phi(\la_{\ast})=p\la_{\ast} - \frac12\si^2\la_{\ast}^2 -  \beta\left[\widehat{B}(\la_{\ast})-1\right].
$$
In this example, we compare the rate of convergence $k$ for three claim sizes distribution: the {\it Gamma distribution} $\text{Gamma}(\alpha, \beta)$ with associated probability density function
$$
p(x;  \alpha, \beta) =\begin{cases} \frac{\delta^{\alpha}}{\Gamma(\alpha)}x^{\alpha-1}e^{-\delta x},&\text{ for }t>0\\
0,&\text{ Otherwise},
\end{cases}
$$
the {\it exponential distribution} $\Exp(\delta) = \text{Gamma}(1, \delta)$, and the mixture of exponential distributions $\text{MExp}(p,\delta_1,\delta_2)$ with associated probability density function
$$
p(x;  p,\delta_1,\delta_2)=\begin{cases}
p\delta_1e^{-\delta_1 x}+(1-p)\delta_2e^{-\delta_2 x},&\text{ if }x>0,\\
0,&\text{ otherwise}.
\end{cases}
$$
Let the claim size be distributed as $\text{Gamma}(2,1)$. Table \ref{Tab:ConvergenceRateSafetyLoadingVolatility} gives the rate of exponential convergence for various combinations of values for the safety loading and the volatility.
\begin{table}[h!]\centering
\ra{1.3}
\scriptsize
\begin{tabular}{@{}ll|rrrrrr@{}}\toprule
&&\multicolumn{6}{c}{Safety loading}\\
\cmidrule{3-8}
 \multicolumn{2}{c|}{Volatility}&$\eta=0.05$&$\eta=0.1$&$\eta=0.15$&$\eta=0.2$&$\eta=0.25$&$\eta=0.3$\\
\midrule
$\sigma=$&  0&0.00082 & 0.00319 & 0.00704 & 0.01227 & 0.01881 & 0.02658 \\
& 1&0.0007 & 0.00277 & 0.00613 & 0.01073 & 0.01653 & 0.02345 \\
&2&0.0005 & 0.00197 & 0.00439 & 0.00775 & 0.01201 & 0.01716 \\
& 3&0.00033 & 0.00132 & 0.00297 & 0.00526 & 0.00819 & 0.01174 \\
 &4&0.00023 & 0.00091 & 0.00204 & 0.00361 & 0.00563 & 0.0081 \\
 &5&0.00016 & 0.00064 & 0.00145 & 0.00257 & 0.00402 & 0.00578 \\
& 6&0.00012 & 0.00048 & 0.00107 & 0.0019 & 0.00297 & 0.00427 \\
& 7&0.00009 & 0.00036 & 0.00082 & 0.00145 & 0.00227 & 0.00327 \\
& 8&0.00007 & 0.00029 & 0.00064 & 0.00114 & 0.00178 & 0.00257 \\
& 9&0.00006 & 0.00023 & 0.00052 & 0.00092 & 0.00144 & 0.00207 \\
& 10&0.00005 & 0.00019 & 0.00042 & 0.00075 & 0.00118 & 0.0017 \\
\bottomrule
\end{tabular}\caption{Rate of exponential convergence in the compound Poisson risk model perturbed by a diffusion, with $\text{Gamma}(2,1)$ distributed claim sizes,  and different values for $\sigma$ and $\eta$.}
\label{Tab:ConvergenceRateSafetyLoadingVolatility}
\end{table}

For a given value of the safety loading, the rate of convergences decreases when the volatility increases. Conversely, for a given volatility level, the rate of convergence increases with the safety loading. The first row of Table~\ref{Tab:ConvergenceRateSafetyLoadingVolatility} contains the rates of convergence when $\sigma=0$, associated to the compound Poisson risk model. Figure~\ref{fig:ConvergenceRateSafetyLoadingVolatility} displays the rates of exponential convergence depending of the volatility level for different values of the safety loading: $\eta=0.1,0.2,0.3$.
\begin{figure}[ht]
  \centering
  \includegraphics[width=7cm]{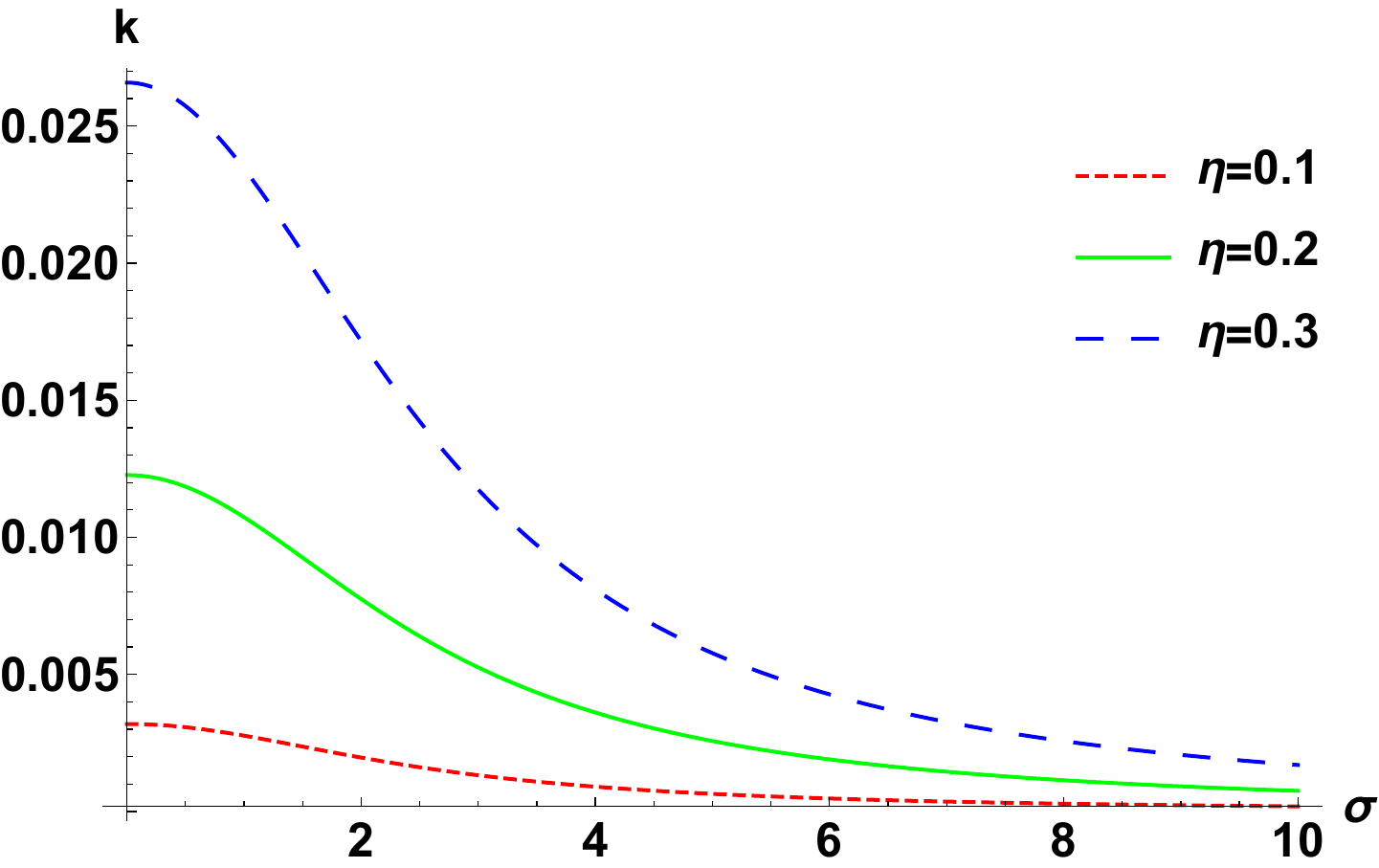}
    \caption{The rate of exponential convergence in the compound Poisson risk model perturbed by a diffusion depending on the volatility, for $\eta=0.1,0.2,0.3$.}
    \label{fig:ConvergenceRateSafetyLoadingVolatility}
\end{figure}
\begin{remark}
Consider the compound Poisson risk model perturbed by a diffusion under constant interest force $i>0$ by assuming that $p(x)=p+ix$, the function $\phi(\la,x)$ then becomes
$$
\phi(\la, x) = -(p+ix)\la + \frac12\si^2\la^2 +  \beta\left[\widehat{B}(\lambda)-1\right],\ \la \ge 0,\ x \in \BR.
$$
Although the function $\phi(\la,x)$ depends on $x$, it is easily seen that
$$
\inf\limits_{x \ge 0}(-\phi(\la, x))=\Phi(\lambda)= p\la - \frac12\si^2\la^2 -  \beta\left[\widehat{B}(\lambda)-1\right],\ \la \ge 0,\ x \in \BR.
$$
The maximization problem is the same as for the compound Poisson risk model perturbed by a diffusion and will lead to the same rate of convergence.
\end{remark}
Let us turn to the study of rate of convergence for different claim sizes distributions. We assume that the claim sizes are either exponentially distributed $\text{Exp}(1/2)$, gamma distributed $\text{Gamma}(2,1)$, or mixture of exponential distributed $\text{MExp}(1/4,3/4,1/4,3/4)$. The mean associated to the claim sizes distributions is the same, but the variance differs:
$$
\Var\left[\text{Gamma}(2,1)\right]<\Var\left[ \text{Exp}(1/2)\right]<\Var\left[ \text{MExp}(3/4,3/4,1/4)\right].
$$
Table \ref{Tab:ConvergenceRateClaimDistribution} contains the values of the rate of exponential convergence over the three claim size distributions.
\begin{table}[h!]\centering
\ra{1.3}
\scriptsize
\begin{tabular}{@{}lll|rcrcr@{}}\toprule
&&&\multicolumn{5}{|c}{Claim Sizes Distributions}\\
\cmidrule{4-8}
Volatility&\multicolumn{2}{c|}{Safety Loadings}&$\text{Exp}(1/2)$&\phantom{abc}&$\text{Gamma}(2,1)$&\phantom{abc}&$\text{MExp}(3/4,3/4,1/4)$\\
\midrule
$\sigma = 0 $&$\eta =$& 0.1 & 0.00238 && 0.00319 && 0.00177 \\
  && 0.2 & 0.00911 && 0.01227 && 0.00668 \\
  && 0.3 & 0.01965 && 0.02658 && 0.01426 \\
  \cmidrule{1-3}
$\sigma = 1$ &$\eta =$& 0.1 & 0.00214 && 0.00277 && 0.00163 \\
  && 0.2 & 0.00824 && 0.01073 && 0.00621 \\
  && 0.3 & 0.01791 && 0.02345 && 0.01335 \\
  \cmidrule{1-3}
$\sigma = 2$ &$\eta =$& 0.1 & 0.00163 && 0.00197 && 0.00132 \\
  && 0.2 & 0.00638 && 0.00775 && 0.00511 \\
  && 0.3 & 0.01405 && 0.01716 && 0.01114 \\
  \cmidrule{1-3}
$\sigma = 3$ &$\eta =$& 0.1 & 0.00116 && 0.00132 && 0.001 \\
  && 0.2 & 0.0046 && 0.00526 && 0.00392 \\
  && 0.3 & 0.01024 && 0.01174 && 0.00865 \\
  \cmidrule{1-3}
$\sigma = 4$ &$\eta =$& 0.1 & 0.00083 && 0.00091 && 0.00074 \\
  && 0.2 & 0.0033 && 0.00361 && 0.00294 \\
 && 0.3 &0.00737 && 0.0081 && 0.00654 \\
 \cmidrule{1-3}
$\sigma = 5$ &$\eta =$& 0.1 & 0.0006 && 0.00064 && 0.00056 \\
  && 0.2 & 0.00241 && 0.00257 && 0.00222 \\
  && 0.3 & 0.00541 && 0.00578 && 0.00496 \\
  \cmidrule{1-3}
$\sigma = 6$ &$\eta =$& 0.1 & 0.00045 && 0.00048 && 0.00043 \\
  && 0.2 & 0.00181 && 0.0019 && 0.0017 \\
  && 0.3 & 0.00407 && 0.00427 && 0.00382 \\
  \cmidrule{1-3}
$\sigma = 7$ &$\eta =$& 0.1 & 0.00035 && 0.00036 && 0.00033 \\
  && 0.2 & 0.0014 && 0.00145 && 0.00134 \\
  && 0.3 & 0.00315 && 0.00327 && 0.003 \\
  \cmidrule{1-3}
$\sigma = 8$ &$\eta =$& 0.1 & 0.00028&& 0.00029 && 0.00027 \\
  && 0.2 & 0.00111 && 0.00114 && 0.00107 \\
  && 0.3 & 0.0025 && 0.00257 && 0.0024 \\
  \cmidrule{1-3}
$\sigma = 9$ &$\eta =$& 0.1 & 0.00022 && 0.00023 && 0.00022 \\
  && 0.2 & 0.0009 && 0.00092 && 0.00087 \\
  && 0.3 & 0.00202 && 0.00207 && 0.00196 \\
  \cmidrule{1-3}
 $\sigma =10$ &$\eta =$& 0.1 & 0.00019 && 0.00019 && 0.00018 \\
  && 0.2 & 0.00074 && 0.00075 && 0.00072 \\
  && 0.3 & 0.00167 && 0.0017 && 0.00162 \\
\bottomrule
\end{tabular}\caption{Rate of exponential convergence in the compound Poisson risk model perturbed by a diffusion for different claim size distribution.}
\label{Tab:ConvergenceRateClaimDistribution}
\end{table}
The fastest convergence occurs in the gamma cases and the slowliest in the mixture of exponential case.
Figure \ref{fig:ConvergenceRateVolatilitySafetyLoadingClaims} displays the evolution of the rate of exponential convergence depending on the safety loading and the diffusion parameter for the different assumption over the claim sizes.
\begin{figure}[h!]
\centering
\subfigure[The rate of exponential convergence depending on the safety loading and diffusion $\sigma=2$.]
{
\includegraphics[width=6cm]{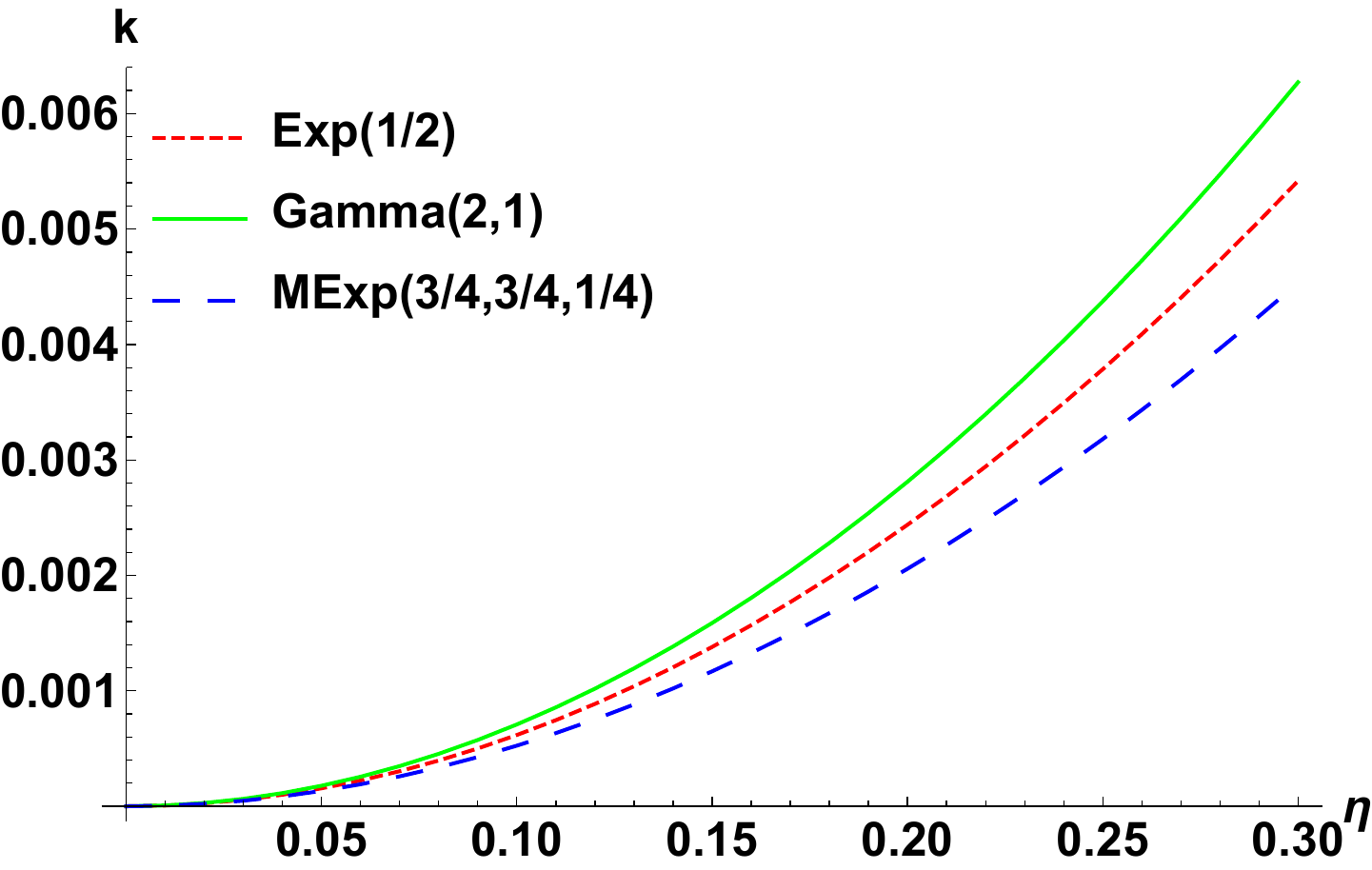}
\label{fig:ConvergenceRateSafetyLoadingClaims}
}
\subfigure[The rate of exponential convergence depending on the volatility and safety loading $\eta=0.1$.]
{
\includegraphics[width=6cm]{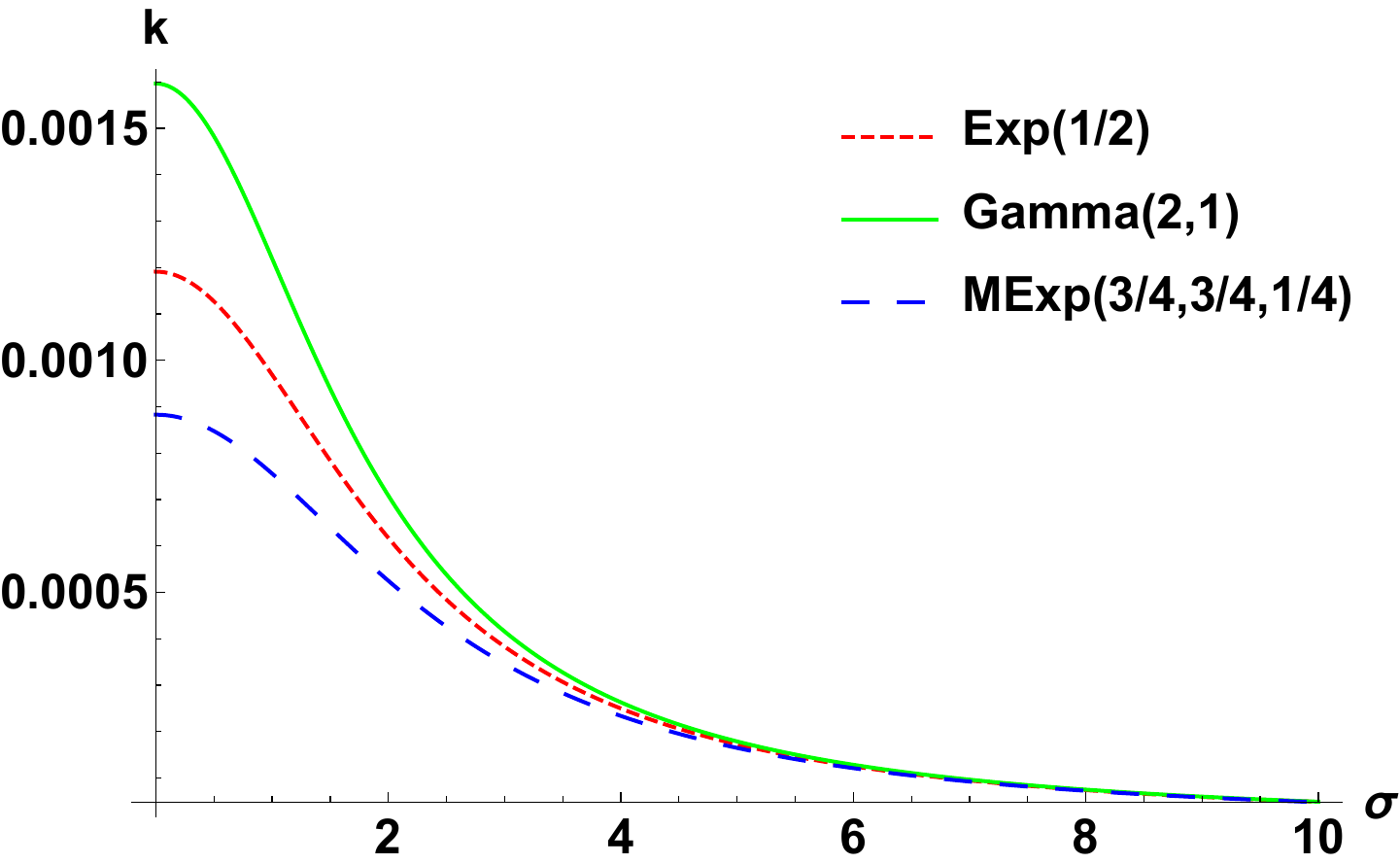}
\label{fig:ConvergenceRateVolatilityClaims}
}
\caption{The rate of exponential convergence in the compound Poisson risk model perturbed by a diffusion for different claim sizes distributions}
\label{fig:ConvergenceRateVolatilitySafetyLoadingClaims}
\end{figure}
In the wake of this numerical study, we may conclude that the speed of convergence depends on the variance of the process. Increasing the variance through the claim sizes distribution or via the diffusion component makes the convergence toward the stationary distribution slower.

\subsection{L\'evy driven risk process.}
In this subsection, we compare the rate of exponential convergence of the ruin probabilities when the liability of the insurance company is modeled by a \textit{gamma process} and an \textit{inverse Gaussian L\'evy process}. The L\'evy measure of a \textit{gamma process}, $\text{GammaP}(\alpha,\beta)$, is given by
\begin{equation}\label{eq:LevyMeasureGammaProcess}
\mu(\text{d}x)=\frac{\alpha e^{-\beta x}}{x},\text{ for }x>0,
\end{equation}
where  $\alpha,\,\beta>0$. Its L\'evy exponent is
\begin{equation}\label{eq:LevyExpGammaProcess}
\kappa(\lambda)=\alpha\ln\left(\frac{\beta}{\beta-\la}\right),\text{ for }\la\in[0,\beta).
\end{equation}
The function $\Phi(\cdot)$ is strictly concave as
\begin{equation*}
\Phi''(\lambda)=-\sigma^{2}-\frac{\alpha}{(\beta-\lambda)^2}<0.
\end{equation*}
It follows that $\la_{\ast}$ is the solution of the equation
\begin{equation*}
p-\sigma^{2}\la-\frac{\alpha}{\beta-\lambda}=0.
\end{equation*}
The rate of exponential convergence is then given by
$$
k=\Phi(\la_{\ast})=p\la_{\ast} - \frac12\si^2\la_{\ast}^2 - \alpha\ln\left(\frac{\beta}{\beta-\la_\ast}\right).
$$
The L\'evy measure associated to the \textit{inverse Gaussian L\'evy process}, $\text{IGP}(\gamma)$, is defined as
\begin{equation}\label{eq:LevyMeasureInverseGaussianProcess}
\mu(\text{d}x)=\frac{1}{\sqrt{2\pi}x^{3/2}}e^{-x\gamma^{2}/2},\text{ for }x>0.
\end{equation}
where $\gamma>0$. Its L\'evy exponent is
\begin{equation}\label{eq:LevyExpIGProcess}
\kappa(\lambda)=\gamma-\sqrt{\gamma^{2}-2\la},\text{ for }\la\in[0,\gamma^{2}/2).
\end{equation}
The function $\Phi$ is strictly concave as
\begin{equation*}
\Phi''(\lambda)=-\sigma^{2}-(\gamma^{2}-2\lambda)^{-3/2}<0
\end{equation*}
It follows that $\la_{\ast}$ is the solution of the equation
\begin{equation*}
p-\sigma^{2}\la-\frac{1}{\sqrt{\gamma^{2}-2\lambda}}=0,
\end{equation*}
The rate of exponential convergence is then given by
$$
k=\Phi(\la_{\ast})=p\la_{\ast} - \frac12\si^2\la_{\ast}^2 - \gamma+\sqrt{\gamma^2-2\la_\ast}.
$$
We set $\gamma=1$, $\alpha=1/2$, $\beta=1/2$, to match the first moment of the liabilities in both risk model at time $t=1$. Table \ref{Tab:ConvergenceRateLevyDriven} contains the value of the exponential rate of convergence when the liability of the insurance company is governed by a \textit{gamma process} or an \textit{inverse Gausian L\'evy process} depending on the safety loading and the volatility of the diffusion.
\begin{table}[h!]\centering
\ra{1.3}
\scriptsize
\begin{tabular}{@{}lll|rcr@{}}\toprule
&&&\multicolumn{3}{|c}{L\'evy processes}\\
  \cmidrule{4-6}
Volatility&\multicolumn{2}{c|}{Safety Loadings}&\text{GammaP}(1/2,1/2)&\phantom{abc}&\text{IGP}(1)\\
\midrule
$\sigma=0$ &$\eta =$& 0.1 & 0.02617 && 0.05 \\
  && 0.2 & 0.05442 && 0.1 \\
  && 0.3 & 0.08441 && 0.15 \\
    \cmidrule{1-3}
$\sigma= 1$ &$\eta =$& 0.1 & 0.01809 & &0.0271 \\
  && 0.2 & 0.03882 && 0.05806 \\
  && 0.3 & 0.06189 && 0.09238 \\
  \cmidrule{1-3}
$\sigma= 2$&$\eta =$ & 0.1 & 0.00921 && 0.01104 \\
 && 0.2 & 0.02013 && 0.02412 \\
  && 0.3 & 0.03272 & &0.03923 \\
  \cmidrule{1-3}
$\sigma= 3$&$\eta =$ & 0.1 & 0.00503 && 0.00552 \\
  && 0.2 & 0.01101 && 0.01207 \\
  && 0.3 & 0.01794 && 0.01965 \\
  \cmidrule{1-3}
$\sigma= 4$&$\eta =$ & 0.1 & 0.00307 && 0.00324 \\
  && 0.2 & 0.00671 && 0.00709 \\
  && 0.3 & 0.01094 && 0.01153 \\
  \cmidrule{1-3}
$\sigma= 5$&$\eta =$ & 0.1 & 0.00204 && 0.00212 \\
  && 0.2 & 0.00447 && 0.00463 \\
  && 0.3 & 0.00727 && 0.00753 \\
  \cmidrule{1-3}
$\sigma= 6$&$\eta =$ & 0.1 & 0.00145 && 0.00149 \\
  && 0.2 & 0.00317 & &0.00325 \\
  && 0.3 & 0.00516 & &0.00529 \\
  \cmidrule{1-3}
$\sigma= 7$&$\eta =$ & 0.1 & 0.00108 && 0.0011 \\
  && 0.2 & 0.00236 && 0.0024 \\
  && 0.3 & 0.00384 && 0.00391 \\
  \cmidrule{1-3}
$\sigma= 8$&$\eta =$ & 0.1 & 0.00083 && 0.00085 \\
  && 0.2 & 0.00182 & &0.00185 \\
  && 0.3 & 0.00296 && 0.00301 \\
  \cmidrule{1-3}
$\sigma= 9 $&$\eta =$& 0.1 & 0.00066 && 0.00067 \\
  && 0.2 & 0.00145 && 0.00146 \\
&& 0.3 & 0.00236 && 0.00238 \\
\cmidrule{1-3}
$\sigma= 10$&$\eta =$ & 0.1 & 0.00054 && 0.00054 \\
  && 0.2 & 0.00118 && 0.00119 \\
  && 0.3 & 0.00192 & &0.00193 \\
\bottomrule
\end{tabular}\caption{Rate of exponential convergence in L\'evy driven risk models.}
\label{Tab:ConvergenceRateLevyDriven}
\end{table}
Figure \ref{fig:ConvergenceRateSafetyLoadingLevy} displays the rates of exponential convergence for the considered L\'evy driven risk models.
\begin{figure}[h!]
\centering
\subfigure[The rate of exponential convergence depending on the safety loading, and volatility $\sigma=1$.]
{
\includegraphics[width=6cm]{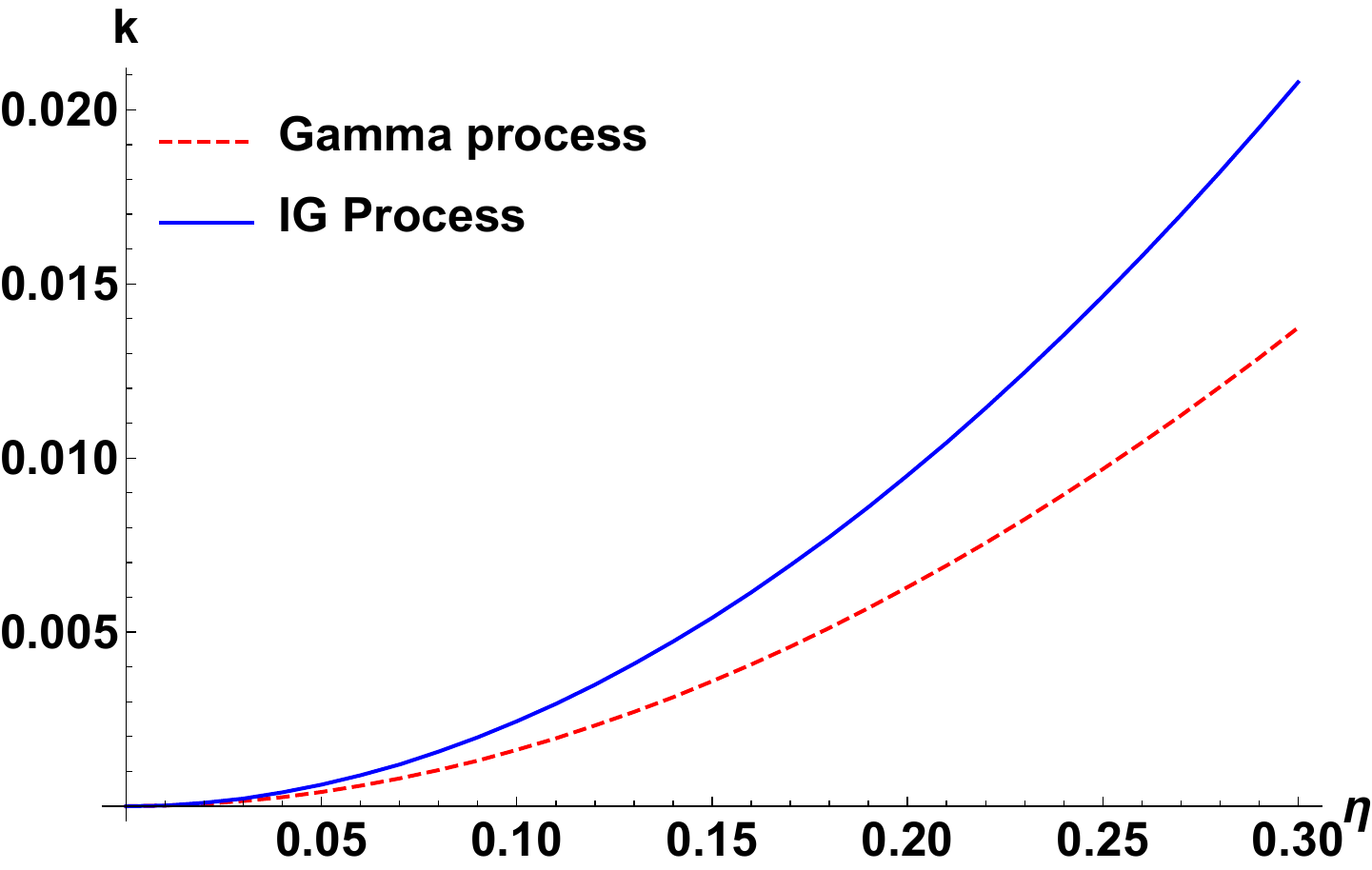}
\label{fig:ConvergenceRateSafetyLoadingLevy}
}
\subfigure[The rate of exponential convergence depending on the volatility and safety loading $\eta=0.2$.]
{
\includegraphics[width=6cm]{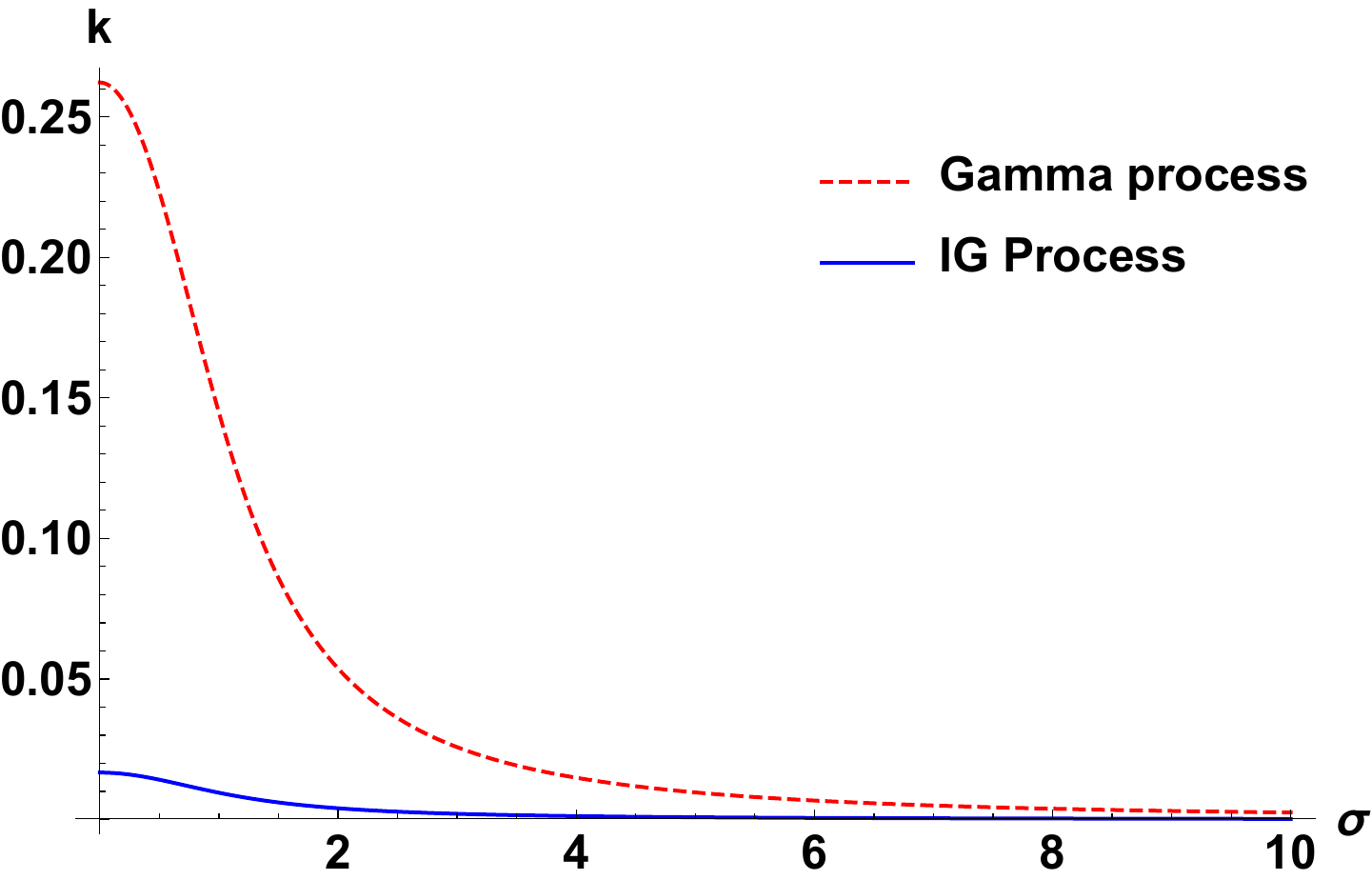}
\label{fig:ConvergenceRateVolatilityLevy}
}
\caption{The rate of exponential convergence for L\'evy driven risk processes.}
\label{fig:ConvergenceRateLevy}
\end{figure}
We observe that the impact of the volatility and the safety loading on the convergence rate remains the same as in the compound Poisson case. The rate of exponential convergence is noticeably greater when the liability of the insurance company follows an \textit{inverse Gaussian L\'evy process}.

\section{Proof of Theorem~\ref{thm:main-1}}\label{sec:ProofMainThm}
 If $Y$ were a reflected jump-diffusion with a.s. finitely many jumps in finite time, and with positive diffusion coefficient, then we could directly apply \cite[Theorem 4.1, Theorem 4.3]{MyOwn12}, and complete the proof of Theorem~\ref{thm:main-1}. However, we might have: (a) zero diffusion coefficient $\si(x) = 0$ for some $x$; (b) infinite L\'evy measure $\mu$, that is, infinitely many jumps in finite time horizon.

\smallskip

In the proof of \cite[Theorem 3.2]{MyOwn12}, we used the following property: for all $t > 0$, $x \in \BR_+$, and $A \subseteq \BR_+$ of positive Lebesgue measure, we have $Q^t(x, A) > 0$. This property might not hold for the case $\si(x) = 0$ for some $x \in \BR_+$. We bypass this difficulty via the following method: approximating the reflected jump-diffusion $Y$ by a ``regular'' reflected jump-diffusion, where $\si(x) > 0$ for $x \in \BR_+$, and the L\'evy measure is finite.

\smallskip
%
%In the first part of this proof, we assume the drift coefficient $p_*$ is Lipschitz. In the second part, we deal with the general (sublinear) drift coefficient by approximating it with Lipschitz drifts.

For an $\eps > 0$, let $Y_{\eps} = (Y_{\eps}(t),\, t \ge 0)$ be the reflected jump-diffusion on $\BR_+$, with drift coefficient $p_*$, diffusion coefficient $\si_{\eps}(\cdot) = \si(\cdot) + \eps$, and jump measure $\mu_{\eps}(\cdot) = \mu(\cdot\cap[\eps, \eps^{-1}])$. Note that this is a reflected jump-diffusion with positive diffusion coefficient, and with finite L\'evy measure: $\si_{\eps}(y) > 0$ for all $y \in \BR_+$, and $\mu_{\eps}(\BR_+) < \infty$. Therefore, we can apply results of \cite{MyOwn12} to this process. For $x \in \BR_+$, let
$$
\phi_{\eps}(x, \la) := p_{\ast}(x)\la + \frac12\si^2_{\eps}(x)\la^2 + \int_{\eps}^{\eps^{-1}}\left(e^{\la y} - 1\right)\,\mu_{\eps}(\md y).
$$
For every $x \ge 0$, we have:
\begin{equation}
\label{eq:difference-k}
\phi(x, \la) -\phi_{\eps}(x, \la) = -\left[\eps\si_{\eps}(x) + \frac12\eps^2\right]\la^2 + \left(\int_0^{\eps} + \int_{\eps^{-1}}^{\infty}\right)\left(e^{\la y} - 1\right)\,\mu(\md y).
\end{equation}
Recall also that
\begin{equation}
\label{eq:finite-exp-moment}
\int_0^{\infty}\left(e^{\la y} - 1\right)\,\mu(\md y) < \infty.
\end{equation}
Combining~\eqref{eq:difference-k} with~\eqref{eq:finite-exp-moment} and the boundedness of $\si$ from Assumption \ref{as:1}, we have:
\begin{equation}
\label{eq:uniform-convergence-k}
\sup\limits_{x \ge 0}\left|\phi_{\eps}(x, \la) - \phi(x, \la)\right| \to 0,\ \eps \downarrow 0.
\end{equation}
By our assumptions,
\begin{equation}
\label{eq:k-sup-negative}
\sup\limits_{x \ge 0}\phi(x, \la) = -\Phi(\la) < 0.
\end{equation}
From~\eqref{eq:uniform-convergence-k}, we have:
\begin{equation}
\label{eq:k-sup-convergence}
-\sup\limits_{x \ge 0}\phi_{\eps}(x, \la) =: \Phi_{\eps}(\la) \to \Phi(\la).
\end{equation}
From~\eqref{eq:k-sup-convergence} and~\eqref{eq:k-sup-negative}, we conclude that  there exists an $\eps_0 > 0$ such that for $\eps \in [0,\eps_0]$, $\Phi_{\eps}(\la) > 0$. Apply \cite[Theorem 4.3]{MyOwn12} to prove the statement of Theorem~\ref{thm:main-1} for the process $Y_{\eps}$. For consistency of notation, denote $Y_0 := Y$. There exists a unique  stationary distribution $\pi_{\eps}$ for $Y_{\eps}$, which satisfies $(\pi_{\eps}, V_{\la}) < \infty$; and the transition kernel $Q_{\eps}^t(x, \cdot)$ of this process $Y_{\eps}$ satisfies
\begin{equation}
\label{eq:exp-ergodicity-eps}
\norm{Q_{\eps}^t(x, \cdot) - \pi_{\eps}(\cdot)}_{V_{\la}} \le \left[V_{\la}(x) + (\pi_{\eps}, V_{\la})\right]e^{-\Phi_{\eps}(\la)t}.
\end{equation}
We would like to take the limit $\eps \downarrow 0$ in~\eqref{eq:exp-ergodicity-eps}. To this end, let us introduce some new notation. Take a smooth $C^{\infty}$ function $\theta : \BR_+ \to \BR_+$ which is nondecreasing, and satisfies
$$
\theta(x) =
\begin{cases}
0,\ x \le s_-;\\
x,\ x \ge s_+;
\end{cases}
\ \ \theta(x) \le x,
$$
for some fixed $s_+ > s_- > 0$. The function $\theta$ is Lipschitz on $\BR_+$: there exists a constant $C(\theta) > 0$ such that
\begin{equation}
\label{eq:theta-Lipschitz}
|\theta(s_1) - \theta(s_2)| \le C(\theta)|s_1 - s_2|\ \mbox{for all}\ s_1, s_2 \in \BR_+.
\end{equation}
Next, define
$$
\tilde{V}_{\la}(x) = V_{\la}(\theta(x)) = e^{\la\theta(x)}.
$$
The process $Y_{\eps}$ has the generator $\CL_{\eps}$, given by the formula
$$
\CL_{\eps}f(x) = p_{\ast}(x)f'(x) + \frac12\si_{\eps}^2(x)f''(x) + \int_{\eps}^{\eps^{-1}}\left[f(x+y) - f(x)\right]\,\mu(\md y)
$$
for $f \in C^2(\BR_+)$ with $f'(0) = 0$. Repeating calculations from \cite[Theorem 3.2]{MyOwn12} with minor changes, we get:
\begin{equation}
\label{eq:Lyapunov-example}
\CL_{\eps}\tilde{V}_{\la}(x) \le -\Phi_{\eps}(\la)\tilde{V}_{\la}(x) + c_{\eps}1_{[0, s_+]}(x),\ x \in \BR_+,
\end{equation}
with the constant
\begin{equation}
\label{eq:c-constant}
c_{\eps} := \max\limits_{x\in[0, s_+]}\left[\CL_{\eps}\tilde{V}_{\la}(x) + \phi_{\eps}(\la,x)\tilde{V}_{\la}(x)\right].
\end{equation}

\begin{lemma}
$\varlimsup_{\eps \downarrow 0}(\pi_{\eps}, V_{\la}) < \infty$.
\label{lemma:finite-limsup}
\end{lemma}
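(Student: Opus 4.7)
The plan is to apply a Foster--Lyapunov-type argument by integrating the Lyapunov inequality~\eqref{eq:Lyapunov-example} against the stationary measure $\pi_\eps$ itself. For each fixed $\eps \in (0, \eps_0]$, $Y_\eps$ has positive diffusion coefficient and finite L\'evy measure, so \cite[Theorem 4.3]{MyOwn12} applies directly to $Y_\eps$ and gives $(\pi_\eps, V_\la) < \infty$; since $\theta(x) \le x$ implies $\tilde V_\la \le V_\la$, we also obtain $(\pi_\eps, \tilde V_\la) < \infty$. This integrability, combined with a standard truncation argument, justifies $(\pi_\eps, \CL_\eps \tilde V_\la) = 0$ by stationarity, and integrating~\eqref{eq:Lyapunov-example} against $\pi_\eps$ yields the a priori bound
$$
(\pi_\eps, \tilde V_\la) \le \frac{c_\eps}{\Phi_\eps(\la)}.
$$

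The remaining task is to show this bound is uniform in $\eps$ as $\eps \downarrow 0$. The denominator poses no difficulty: by~\eqref{eq:k-sup-convergence}, $\Phi_\eps(\la) \to \Phi(\la) > 0$, so $\Phi_\eps(\la) \ge \Phi(\la)/2$ for all sufficiently small $\eps$. For the numerator, each term in the defining expression~\eqref{eq:c-constant} of $c_\eps$ must be bounded uniformly on $[0, s_+]$: the smooth function $\tilde V_\la$ and its first two derivatives are bounded by constants depending only on $\la$ and $\theta$; the coefficients $p_\ast, \si$ are bounded on $[0, s_+]$ by Assumption~\ref{asmp:Lipschitz}; and $\si_\eps^2 = (\si + \eps)^2$ is uniformly bounded for $\eps \le 1$. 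The most delicate piece is the jump integral, which I would control via the pointwise estimate
$$
\tilde V_\la(x + y) - \tilde V_\la(x) \le \tilde V_\la(x)\bigl(e^{\la C(\theta) y} - 1\bigr),
$$
an immediate consequence of the monotonicity and Lipschitz property~\eqref{eq:theta-Lipschitz} of $\theta$. Integrating in $y$ against $\mu$ yields a bound of the form $\tilde V_\la(x)\,\kappa(\la C(\theta))$, finite by Assumption~\ref{asmp:finite-exp} provided the cutoff $\theta$ is chosen with Lipschitz constant close enough to $1$ that $\la C(\theta) < \la_0$ (possible since $\la < \la_0$). The same exponential-moment estimate dominates $\phi_\eps(\la, x)$ uniformly on $[0, s_+]$. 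Assembling these, one gets $c_\eps \le C_0$ for some $C_0$ independent of $\eps$, and hence $(\pi_\eps, \tilde V_\la) \le 2 C_0/\Phi(\la)$ for all sufficiently small $\eps$.

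Finally, to pass from $\tilde V_\la$ back to $V_\la$, note that $V_\la(x) - \tilde V_\la(x) = e^{\la x} - e^{\la \theta(x)}$ vanishes on $[s_+, \infty)$ (where $\theta$ is the identity) and is pointwise bounded by $e^{\la s_+}$ on $[0, s_+]$. Therefore $(\pi_\eps, V_\la) \le (\pi_\eps, \tilde V_\la) + e^{\la s_+}$, and combining with the bound above yields $\varlimsup_{\eps \downarrow 0}(\pi_\eps, V_\la) \le 2 C_0/\Phi(\la) + e^{\la s_+} < \infty$, as required.

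The main technical obstacle is the uniform control of the jump contribution to $c_\eps$: one must choose the cutoff $\theta$ so that $\la C(\theta) < \la_0$, and verify the comparison $\tilde V_\la(x+y) - \tilde V_\la(x) \le \tilde V_\la(x)(e^{\la C(\theta) y} - 1)$. All remaining steps amount to standard compactness bounds for smooth functions with locally bounded coefficients, uniform in $\eps$.
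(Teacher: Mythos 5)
Your proof is correct and follows essentially the same route as the paper's: integrating the Lyapunov inequality~\eqref{eq:Lyapunov-example} against $\pi_\eps$ to get $(\pi_\eps,\tilde V_\la) \le c_\eps/\Phi_\eps(\la)$, controlling $\Phi_\eps(\la)$ via~\eqref{eq:k-sup-convergence}, bounding the jump contribution to $c_\eps$ via the estimate $\tilde V_\la(x+y)-\tilde V_\la(x)\le \tilde V_\la(x)\bigl(e^{\la C(\theta)y}-1\bigr)$ with $C(\theta)$ pushed toward $1$, and then passing from $\tilde V_\la$ back to $V_\la$ using the comparability of the two functions. The only cosmetic difference is that the paper phrases the $c_\eps$ bound as uniform convergence $\CL_\eps\tilde V_\la\to\CL\tilde V_\la$ on $[0,s_+]$, whereas you bound $c_\eps$ term by term directly; these are equivalent.
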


\begin{proof}
The functions $V_{\la}$ and $\tilde{V}_{\la}(x)$ are of the same order, in the sense that
\begin{equation}
\label{eq:equiv}
0 < \inf\limits_{x \ge 0}\frac{\tilde{V}_{\la}(x)}{V_{\la}(x)} \le \sup\limits_{x \ge 0}\frac{\tilde{V}_{\la}(x)}{V_{\la}(x)} < \infty.
\end{equation}
Therefore, it suffices to show that
\begin{equation}
\label{eq:tilde-finite-limsup}
\varlimsup\limits_{\eps \downarrow 0}(\pi_{\eps}, \tilde{V}_{\la}) < \infty.
\end{equation}
Apply the probability measure $\pi_{\eps}$ to both sides of the inequality~\eqref{eq:Lyapunov-example}. This probability measure is stationary; therefore, the left-hand side of~\eqref{eq:Lyapunov-example} becomes $(\pi_{\eps}, \CL_{\eps}\tilde{V}_{\la}) = 0$. Therefore,
$$
-\Phi_{\eps}(\la)\bigl(\pi_{\eps}, \tilde{V}_{\la}\bigr) + c_{\eps}\bigl(\pi_{\eps}, 1_{[0, s_+]}\bigr) \ge 0.
$$
Since $(\pi_{\eps}, 1_{[0, s_+]}) = \pi_{\eps}([0, s_+]) \le 1$, we get:
\begin{equation}
\label{eq:upper-bound}
\bigl(\pi_{\eps}, \tilde{V}_{\la}\bigr) \le \frac{c_{\eps}}{\Phi_{\eps}(\la)}.
\end{equation}
From~\eqref{eq:k-sup-convergence} and~\eqref{eq:upper-bound}, to show~\eqref{eq:tilde-finite-limsup}, it suffices to show that
\begin{equation}
\label{eq:c-eps}
\varlimsup\limits_{\eps\downarrow 0}c_{\eps} < \infty.
\end{equation}
This, in turn, would follow from~\eqref{eq:c-constant},~\eqref{eq:k-sup-convergence},  and the following relation:
\begin{equation}
\label{eq:uniform-conv-of-generators}
\CL_{\eps}\tilde{V}_{\la}(x) \to \CL\tilde{V}_{\la}(x),\ \ \mbox{uniformly on}\ \ [0, s_+].
\end{equation}
We can express the difference of generators as
\begin{align}
\label{eq:difference-generators}
\begin{split}
\CL_{\eps}&\tilde{V}_{\lambda}(x) - \CL\tilde{V}_{\lambda}(x) \\ & = \frac12\left(\si_{\eps}^2(x) - \si^2(x)\right)f''(x) - \left(\int_0^{\eps}+\int_{\eps^{-1}}^{\infty}\right)\left[\tilde{V}_{\la}(x+y) - \tilde{V}_{\la}(x)\right]\,\mu(\md y).
\end{split}
\end{align}
The first term in the right-hand side of~\eqref{eq:difference-generators} is equal to
$\frac12(2\eps\si(x) + \eps^2)f''(x)$. Since $\si$ is bounded, this term converges to $0$ as $\eps \downarrow 0$ uniformly on $[0, s_+]$. It suffices to prove that the second term converges to zero as well. For all $x, y \ge 0$, using~\eqref{eq:theta-Lipschitz}, we have:
\begin{align}
\label{eq:elementary-est}
\begin{split}
& 0 \le  \tilde{V}_{\la}(x + y) - \tilde{V}_{\la}(x) = e^{\la\theta(x+y)} - e^{\la\theta(x)} \\ & = e^{\la\theta(x)}\left[e^{\la(\theta(x+y) - \theta(x))} - 1\right] \le \tilde{V}_{\la}(x)\left[e^{\la C(\theta)y}  - 1\right].
\end{split}
\end{align}
Changing the parameter $s_-$ and letting $s_- \downarrow 0$, we have: $\theta(x) \to x$ uniformly on $\BR_+$. Therefore, we can make the Lipschitz constant $C(\theta)$ as close to $1$ as necessary. Also, note that for $\la'$ in some neighborhood of $\la$, we have:
\begin{equation}
\label{eq:finite-exp-moment-nbhd}
\int_0^{\infty}\left(e^{\la'x} - 1\right)\,\mu(\md x) < \infty.
\end{equation}
Combining~\eqref{eq:finite-exp-moment-nbhd} with~\eqref{eq:elementary-est}, using that $\sup_{x \in [0, s_+]}\tilde{V}_{\la}(x) < \infty$, and making $C(\theta)$ close enough to $1$, we complete the proof that the second term in the right-hand side of~\eqref{eq:difference-generators} tends to $0$ as $\eps \downarrow 0$. This completes the proof of~\eqref{eq:uniform-conv-of-generators}, and with it that of~\eqref{eq:c-eps} and Lemma~\ref{lemma:finite-limsup}.
\end{proof}

Now, we state a fundamental lemma, and complete the proof of Theorem~\ref{thm:main-1} assuming that this lemma is proved. The proof is postponed until the end of this section.

\begin{lemma}
Take a version $\tilde{Y}_{\eps}$ of the reflected jump-diffusion $Y_{\eps}$, starting from $y_{\eps} \ge 0$, for $\eps \ge 0$. If $y_{\eps} \to y_0$, then we can couple $\tilde{Y}_{\eps}$ and $\tilde{Y}_0$ so that for every $T \ge 0$,
$$
\lim\limits_{\eps \downarrow 0}\ME\sup\limits_{0 \le t \le T}\bigl|\tilde{Y}_{\eps}(t) - \tilde{Y}_{0}(t)\bigr|^2 = 0.
$$
\label{lemma:fundamental}
\end{lemma}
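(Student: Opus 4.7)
The plan is to construct all $\tilde Y_\eps$ on a common probability space via a joint coupling, and then obtain the claimed $L^2$-sup convergence from an It\^o--Gronwall argument. Let $N(ds, dy)$ be a Poisson random measure on $(0,\infty)^2$ with intensity $ds\otimes\mu(dy)$, independent of $W$. Set $L_0(t) := \int_0^t\!\int_0^\infty y\,N(ds,dy)$ and $L_\eps(t) := \int_0^t\!\int_\eps^{\eps^{-1}} y\,N(ds,dy)$; these are precisely the driving subordinators of $\tilde Y_0$ and $\tilde Y_\eps$. Under Assumption~\ref{asmp:finite-exp} combined with Lemma~\ref{lemma:tech}, $\int_0^\infty y^2\,\mu(dy)<\infty$, so $J_\eps(t):=L_0(t)-L_\eps(t)$ is a subordinator satisfying $\ME J_\eps(T)^2\to 0$ as $\eps\downarrow 0$. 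On this common space, let $\tilde Y_\eps$ be the strong solution of~\eqref{eq:RSDE} driven by $W$ and $L_\eps$ with $\tilde Y_\eps(0)=y_\eps$, provided by Lemma~\ref{lemma:ExistenceRProcess} applied to the Lipschitz coefficients $p_\ast$ and $\sigma_\eps:=\sigma+\eps$.

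Write $\Delta(t):=\tilde Y_\eps(t)-\tilde Y_0(t)$ and apply It\^o's formula to $\Delta^2$. The drift contribution $2\int_0^t \Delta[p_\ast(\tilde Y_\eps)-p_\ast(\tilde Y_0)]\,ds$ is bounded by $2K\int_0^t \Delta(s)^2\,ds$ by the Lipschitz property of $p_\ast$; the continuous quadratic variation is at most $2K^2\int_0^t\Delta(s)^2\,ds+2\eps^2 t$. The reflection contribution $2\int_0^t \Delta\,(dR_\eps-dR_0)$ is non-positive, because $dR_\eps$ is supported on $\{\tilde Y_\eps=0\}$ where $\Delta=-\tilde Y_0\le 0$, and $dR_0$ is supported on $\{\tilde Y_0=0\}$ where $\Delta=\tilde Y_\eps\ge 0$. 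For the jump part, write $J_\eps(t)=\tilde J_\eps(t)+\delta_\eps t$ with $\delta_\eps:=\int_{(0,\eps)\cup(\eps^{-1},\infty)}y\,\mu(dy)\to 0$ and $\tilde J_\eps$ the compensated-Poisson martingale: the martingale piece vanishes in expectation and the residual drift $-2\delta_\eps\int_0^t\Delta\,ds$ is absorbed into the Gronwall kernel via Young's inequality, while the jump quadratic variation $\sum_{s\le t}(J_\eps(s)-J_\eps(s-))^2$ is bounded in expectation by $T\rho_\eps^2$ with $\rho_\eps^2:=\int_{(0,\eps)\cup(\eps^{-1},\infty)}y^2\,\mu(dy)\to 0$.

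Taking expectation yields an inequality of the form
$$\ME\,\Delta(t)^2\le (y_\eps-y_0)^2+C\int_0^t \ME\,\Delta(s)^2\,ds+\eta_\eps,$$
for a constant $C$ depending only on $K$ and an error $\eta_\eps\to 0$. Gronwall's lemma gives $\sup_{t\le T}\ME\Delta(t)^2\to 0$. To promote this to $\ME\sup_{t\le T}\Delta(t)^2\to 0$, apply the Burkholder--Davis--Gundy inequality to the Brownian martingale $2\int_0^\cdot\Delta(s-)[\sigma_\eps(\tilde Y_\eps)-\sigma(\tilde Y_0)]\,dW$ and to the compensated-Poisson martingale $-2\int_0^\cdot \Delta(s-)\,d\tilde J_\eps(s)$; estimate their quadratic variations using $(\sigma_\eps-\sigma)^2\le 2K^2\Delta^2+2\eps^2$ and $\sum(J_\eps(s)-J_\eps(s-))^2\le J_\eps(t)^2$, apply Young's inequality to absorb a small multiple of $\ME\sup_{t\le T}\Delta^2$ into the left-hand side, and invoke Gronwall once more. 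The main obstacle is the jump piece: because $L_\eps$ is not a martingale, the It\^o expansion of $\Delta^2$ carries both a finite-variation integral $\int\Delta(s-)\,dJ_\eps(s)$ and a jump quadratic-variation sum, and it is the decomposition of $J_\eps$ into its compensated martingale and the vanishing drift $\delta_\eps t$ that makes the whole Gronwall scheme close.
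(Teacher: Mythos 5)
Your proposal is correct, but it takes a genuinely different route from the paper's proof. The paper also couples via a common Brownian motion $W$ and by taking $L_\eps$ to be $L$ with jumps outside $[\eps,\eps^{-1}]$ removed, and then proceeds by a \emph{contraction-mapping} argument: it rewrites the reflected SDE as the fixed-point equation $Y_\eps = \mathcal{R}_{\mathcal{X}_\eps}(Y_\eps)$ with $\mathcal{R}_{\mathcal{X}} = \mathcal{S}\circ\mathcal{P}_{\mathcal{X}}$, where $\mathcal{S}$ is the Skorohod map (known to be $1$-Lipschitz in the $\norm{\cdot}_{2,T}$ norm) and $\mathcal{P}_{\mathcal{X}}$ is the drift--diffusion Picard map, shows via Lemma~\ref{lemma:Lipschitz-general} that $\mathcal{R}_{\mathcal{X}}(\mathcal{Z})$ is Lipschitz in both $\mathcal{Z}$ (with constant $C_T<1$ for small $T$) and $\mathcal{X}$ (with constant $1$), iterates the contraction to get $\norm{\mathcal{Y}(\mathcal{X})-\mathcal{Y}(\mathcal{X}')}_{2,T}\le (1-C_T)^{-1}\norm{\mathcal{X}-\mathcal{X}'}_{2,T}$, and finally feeds in $\ME\sup_{t\le T}|\mathcal{X}_0-\mathcal{X}_\eps|^2\to 0$, which is checked directly from the second-moment formula for the truncated subordinator plus the $\eps W$ perturbation. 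You instead apply It\^o's formula to $\Delta^2$, observe that the reflection cross-term $2\int\Delta\,(dR_\eps-dR_0)$ is nonpositive (the analogue of the $1$-Lipschitz property of $\mathcal{S}$), split the truncated jump input into its compensated martingale and a vanishing drift $\delta_\eps t$, and close a Gronwall loop, upgrading to the running supremum by BDG. Each approach buys something: the paper's fixed-point scheme simultaneously yields strong existence and pathwise uniqueness of the reflected SDE (in fact it is reused verbatim for Lemma~\ref{lem:ExistenceRProcess} and Lemma~\ref{lem:ExistenceXProcess}), but it only closes for $T$ small enough that $C_T<1$ and then requires pasting subintervals; your Gronwall estimate works directly for arbitrary $T$ and makes the role of the reflection-term sign explicit, but presupposes existence of the solutions. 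Your use of $\int_0^\infty y^2\,\mu(dy)<\infty$ and $\delta_\eps\to 0$ is justified by Assumption~\ref{asmp:finite-exp} together with~\eqref{eq:classic-mean}, exactly as the paper uses them in~\eqref{eq:conv-of-L}, and your tacit use of a Lipschitz bound on $\sigma$ is justified: bounded $\sigma$ with Lipschitz $\sigma'$ forces $\sigma'$ bounded, hence $\sigma$ (and therefore $p_*=-p-\sigma\sigma'$) Lipschitz.
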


Since $V_{\la}(\infty) = \infty$, Lemma~\ref{lemma:finite-limsup} implies tightness of the familly $(\pi_{\eps})_{\eps \in (0, \eps_0]}$ of probability measures. Now take a stationary version $\ol{Y}_{\eps}$ of the reflected jump-diffusion $Y_{\eps}$: for every $t \ge 0$, let $\ol{Y}_{\eps}(t) \sim \pi_{\eps}$. Take a sequence $(\eps_n)_{n \ge 1}$ such that $\eps_n \downarrow 0$ as $n \to \infty$, and $\pi_{\eps_n} \Ra \pi_0$ (where $\Ra$ stands for weak convergence) for some probability measure $\pi_0$ on $\BR_+$. It follows from Lemma~\ref{lemma:fundamental} that for every $t \ge 0$, we have: $\ol{Y}_{\eps_n}(t) \Ra \ol{Y}_0(t)$ as $n \to \infty$, where $\ol{Y}_0$ is a stationary version of the reflected jump-diffusion $Y_0$: that is, $\ol{Y}_0(t) \sim \pi_0$ for every $t \ge 0$. In other words, we proved that the reflected jump-diffusion $Y_0$ has a stationary distribution $\pi_0$.

\smallskip

Next, take a measurable function $g : \BR_+ \to \BR$ such that $|g(x)| \le V_{\la}(x)$ for all $x \in \BR_+$.

\begin{lemma} $\left(\pi_{\eps_n}, g\right) \to (\pi_0, g)$ as $n \to \infty$.
\label{lemma:DCT}
\end{lemma}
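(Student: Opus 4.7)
The plan is to combine the weak convergence $\pi_{\eps_n} \Ra \pi_0$ with $V_{\la}$-uniform integrability of the sequence $\{\pi_{\eps_n}\}$ and then invoke Vitali's convergence theorem. The main technical step is a moment bound strictly stronger than Lemma~\ref{lemma:finite-limsup}. Since $\kappa$ is continuous on $[0,\la_0)$ (by dominated convergence using~\eqref{eq:finite-exp}) and the $\la$-dependence of $\phi(\la, x)$ is explicit in~\eqref{eq:Phi-definition}, the map $\la \mapsto \Phi(\la)$ is continuous at the chosen $\la$. Hence one can pick $\la' \in (\la, \la_0)$ with $\Phi(\la') > 0$ and replay the proof of Lemma~\ref{lemma:finite-limsup} with $\la'$ in place of $\la$ (the Lyapunov inequality~\eqref{eq:Lyapunov-example} and the convergence $\Phi_{\eps}(\la') \to \Phi(\la')$ from~\eqref{eq:k-sup-convergence} both go through verbatim for $\la'$) to conclude that $\sup_n (\pi_{\eps_n}, V_{\la'}) < \infty$.

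From this, the required uniform integrability is immediate: for every $M > 0$,
\[
(\pi_{\eps_n}, V_{\la} 1_{[M,\infty)}) \le e^{-(\la'-\la)M}(\pi_{\eps_n}, V_{\la'}),
\]
which tends to zero uniformly in $n$ as $M \to \infty$. Since $|g| \le V_{\la}$, the family $\{g\}$ is likewise uniformly integrable under $\{\pi_{\eps_n}\}$. Now I would apply Skorokhod's representation theorem (valid since $\BR_+$ is Polish and $\pi_{\eps_n} \Ra \pi_0$) to obtain random variables $X_n \sim \pi_{\eps_n}$ and $X \sim \pi_0$ on a common probability space with $X_n \to X$ almost surely. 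For continuous $g$ one has $g(X_n) \to g(X)$ a.s., and Vitali's theorem yields $(\pi_{\eps_n}, g) = \ME g(X_n) \to \ME g(X) = (\pi_0, g)$. The extension to a general Borel $g$ with $|g| \le V_{\la}$ is then handled by truncation: multiply by a continuous cutoff $\chi_M$ supported in $[0, M+1]$ with $\chi_M \equiv 1$ on $[0, M]$; the tail piece $(1 - \chi_M)g$ is controlled uniformly in $n$ by the $V_{\la}$-uniform integrability established above, while the compactly supported piece $g\chi_M$ is handled via the Portmanteau theorem applied to $\pi_0$-continuity sets.

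The main obstacle is the bootstrapped moment bound for the strictly larger exponent $\la'$, which in turn hinges on the continuity of $\Phi$ at $\la$; once this is in hand, everything else is standard weak-convergence plus uniform-integrability machinery.
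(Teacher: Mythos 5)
Your proof is essentially the paper's: the key step is to find $\la' > \la$ with $\Phi(\la') > 0$, bootstrap Lemma~\ref{lemma:finite-limsup} to get $\varlimsup_n (\pi_{\eps_n}, V_{\la'}) < \infty$, deduce uniform integrability of $V_\la$ under $\{\pi_{\eps_n}\}$ by a tail estimate (the paper instead observes $|g|^{\la'/\la} \le V_{\la'}$, which is the same thing), and combine with weak convergence to pass to the limit. Your justification that such a $\la'$ exists (continuity of $\Phi$) is no more rigorous than the paper's, which incorrectly calls $\Phi$ a \emph{supremum} of $-\phi(\cdot, x)$ and deduces lower semicontinuity (it is actually an infimum, hence upper semicontinuous, which would go the wrong way); the clean argument, which neither of you states, is that each $\phi(\cdot, x)$ is convex in $\la$, so $\Phi = -\sup_x \phi(\cdot, x)$ is concave and therefore continuous on the interior of $\{\la : \Phi(\la) > -\infty\}$, making $\{\Phi > 0\}$ open. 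Your explicit invocation of Skorokhod representation and Vitali's theorem spells out what the paper compresses into a single sentence.

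The place where you genuinely diverge is the extension to Borel $g$, and there your fix does not close the gap. Weak convergence $\pi_{\eps_n} \Ra \pi_0$ together with $V_\la$-uniform integrability does \emph{not} give $(\pi_{\eps_n}, g) \to (\pi_0, g)$ for an arbitrary bounded compactly supported Borel $g$; consider $g = 1_{\{0\}}$ when the $\pi_{\eps_n}$ are absolutely continuous with mass concentrating near zero while $\pi_0$ has an atom at $0$. Portmanteau applied to $\pi_0$-continuity sets only controls $\pi_{\eps_n}(A)$ for sets with $\pi_0(\partial A) = 0$, and a general Borel $g\chi_M$ cannot be reduced to such sets without extra regularity of $\pi_0$. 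To be fair, the paper's own one-liner (``uniform integrability plus a.s. convergence imply convergence of expected values'') has exactly the same flaw: a.s. convergence of $X_n$ to $X$ does not give a.s. convergence of $g(X_n)$ to $g(X)$ for merely Borel $g$. In the end this does not damage Theorem~\ref{thm:main-1}, because the norm $\norm{\nu}_{V_\la} = \sup_{|g| \le V_\la}|(\nu, g)|$ can be computed by taking the supremum over \emph{continuous} $g$ only (continuous functions are dense in $L^1(|\nu|)$ for any finite signed Borel measure $\nu$), so the estimate for continuous $g$, which both proofs deliver, suffices in the final step. But as stated for all measurable $g$, Lemma~\ref{lemma:DCT} is not established by your argument or by the paper's; that caveat is worth recording.
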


\begin{proof} The function $\Phi$ is a supremum of a family of functions $-\phi(\cdot, x)$, which are continuous in $\la$. Therefore, $\Phi$ is lower semicontinuous, and the set $\{\la > 0\mid \Phi(\la) > 0\}$ is open. Apply Lemma~\ref{lemma:finite-limsup} to some $\la' > \la$ (which exists by the observation above). Then we get:
$$
\varlimsup\limits_{\eps \downarrow 0}\left(\pi_{\eps_n}, V_{\la'}\right) < \infty.
$$
Note also that $|g(x)|^{\la'/\la} \le \left[V_{\la}(x)\right]^{\la'/\la} = V_{\la'}(x)$ for all $x \ge 0$. Therefore, the family $(\pi_{\eps}g^{-1})_{\eps \in (0, \eps_0]}$ of probability distributions is uniformly integrable. Uniform integrability plus a.s. convergence imply convergence of expected values. Thus we complete the proof of Lemma~\ref{lemma:DCT}.
\end{proof}

For all $\eps \ge 0$, take a copy $Y^{\eps}$ of $Y_{\eps}$ starting from the same initial point $x \in \BR_+$.

\begin{lemma} For every $t \ge 0$, we have: $\ME g(Y^{\eps}(t)) \to \ME g(Y^0(t))$ as $\eps \downarrow 0$.
\label{lemma:DCT-time}
\end{lemma}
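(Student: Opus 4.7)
The plan is to combine the pathwise coupling from Lemma~\ref{lemma:fundamental} with a uniform-in-$\eps$ moment estimate at a strictly higher exponent $\la' > \la$, and then use uniform integrability to upgrade weak path-convergence to convergence of expectations.

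First, I would apply Lemma~\ref{lemma:fundamental} with $y_\eps = y_0 = x$ to realize $Y^\eps$ and $Y^0$ on a common probability space with $\ME |Y^\eps(t) - Y^0(t)|^2 \to 0$. This gives convergence in probability, and hence a.s.\ convergence of $Y^{\eps_n}(t) \to Y^0(t)$ along some subsequence $\eps_n \downarrow 0$.

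Next I would establish uniform integrability of $\{g(Y^\eps(t))\}_{\eps \in (0, \eps_0]}$. As observed in the proof of Lemma~\ref{lemma:DCT}, the function $\Phi$ is lower semicontinuous and $\{\la > 0 \mid \Phi(\la) > 0\}$ is open, so there exists $\la' > \la$ with $\Phi(\la') > 0$; by~\eqref{eq:k-sup-convergence} also $\Phi_\eps(\la') > 0$ for all sufficiently small $\eps$. Re-running the construction~\eqref{eq:Lyapunov-example}--\eqref{eq:c-constant} at level $\la'$ yields
\begin{equation*}
\CL_\eps \tilde V_{\la'}(y) \le -\Phi_\eps(\la') \tilde V_{\la'}(y) + c_\eps' 1_{[0,s_+]}(y), \qquad y \in \BR_+,
\end{equation*}
with $\varlimsup_{\eps \downarrow 0} c_\eps' < \infty$ by the same uniform-convergence-of-generators argument as in~\eqref{eq:uniform-conv-of-generators}. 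Applying Dynkin's formula to $s \mapsto e^{\Phi_\eps(\la')s}\tilde V_{\la'}(Y^\eps(s))$ (after a standard localization at exit times of bounded sets to justify the martingale step) gives
\begin{equation*}
\ME \tilde V_{\la'}(Y^\eps(t)) \le \tilde V_{\la'}(x) + \frac{c_\eps'}{\Phi_\eps(\la')},
\end{equation*}
bounded uniformly in $\eps$. By the equivalence~\eqref{eq:equiv} (at exponent $\la'$), the same holds for $\ME V_{\la'}(Y^\eps(t))$. Since $|g(y)|^{\la'/\la} \le V_\la(y)^{\la'/\la} = V_{\la'}(y)$, this shows $\sup_\eps \ME |g(Y^\eps(t))|^{\la'/\la} < \infty$, and uniform integrability of $\{g(Y^\eps(t))\}$ follows.

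Finally I would combine the two ingredients. For $g$ continuous and bounded, a.s.\ convergence $Y^{\eps_n}(t) \to Y^0(t)$ plus bounded convergence yields $\ME g(Y^{\eps_n}(t)) \to \ME g(Y^0(t))$; since the limit is independent of the subsequence extracted, the full net $\eps \downarrow 0$ converges. For continuous $g$ with $|g| \le V_\la$, truncate at level $M$ via $g_M := (g \wedge M) \vee (-M)$, apply the previous step to $g_M$, and let $M \to \infty$; the error $\ME |g(Y^\eps(t)) - g_M(Y^\eps(t))|$ is controlled uniformly in $\eps$ by the uniform integrability established above. The main obstacle is the final extension from continuous to \emph{measurable} $g$ with $|g| \le V_\la$: this step requires approximating such $g$ in $L^1$ against the limiting law $Q^t(x, \cdot)$, which in turn rests on a mild regularity property of that law (essentially, the absence of atoms away from the boundary, which can be deduced from the presence of the jump kernel $\mu$ together with the coupling argument applied to suitable bounded measurable test functions). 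This regularity, rather than the analytic estimates, is the subtle point.
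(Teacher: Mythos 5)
Your core argument matches the paper's: the paper likewise establishes a uniform-in-$\eps$ moment bound $\varlimsup_{\eps\downarrow 0}\ME V_{\la'}(Y^\eps(t))<\infty$ at a strictly larger exponent $\la'$ (via the Lyapunov inequality~\eqref{eq:Lyapunov-example}, integrated as in~\eqref{eq:Lyapunov-integral}), combines it with the coupling from Lemma~\ref{lemma:fundamental} for convergence of the paths, and then closes via uniform integrability exactly as in Lemma~\ref{lemma:DCT}. Your Dynkin/Gronwall bookkeeping at level $\la'$, yielding $\ME\tilde V_{\la'}(Y^\eps(t))\le \tilde V_{\la'}(x)+c'_\eps/\Phi_\eps(\la')$, is a slight cosmetic improvement over the paper's time-dependent bound $\le \tilde V_\la(x)+c_\eps t$, but both serve the same purpose.

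The point you flag at the end is real, and in fact the paper's own proof is terse on it: $L^2$ convergence of $Y^\eps(t)$ to $Y^0(t)$ (and hence a.s.\ convergence along a subsequence) does \emph{not} yield a.s.\ convergence of $g(Y^\eps(t))$ to $g(Y^0(t))$ when $g$ is merely Borel measurable, unless $g$ is continuous a.e.\ with respect to the limit law. However, your proposed fix --- proving a regularity property such as non-atomicity of $Q^t(x,\cdot)$ --- is unnecessarily heavy, and moreover would not be available in the degenerate case $\si\equiv 0$ with finite $\mu$ where the law of $Y^0(t)$ genuinely has atoms. The cleaner resolution is to notice that the lemma is only ever used to take the supremum over $\{g : |g|\le V_\la\}$ in order to obtain the weighted total-variation bound~\eqref{eq:final-formula}. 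For any finite signed Borel measure $\nu$ on $\BR_+$ with $(|\nu|,V_\la)<\infty$, Lusin's theorem combined with absolute continuity of the integral shows that $\sup_{|g|\le V_\la,\,g\text{ continuous}}|(\nu,g)|=\sup_{|g|\le V_\la,\,g\text{ measurable}}|(\nu,g)|=(|\nu|,V_\la)$. Hence it suffices to establish Lemmas~\ref{lemma:DCT} and~\ref{lemma:DCT-time} for \emph{continuous} $g$ (where your argument, and the paper's, is complete), and the passage to the full weighted TV norm is free of charge and needs no structural information about the limiting transition kernel. If you adopt this viewpoint your proof is correct and self-contained; the ``regularity of $Q^t(x,\cdot)$'' route is a dead end.
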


\begin{proof} Following calculations in the proof of \cite[Theorem 3.2]{MyOwn12}, we get:
\begin{equation}
\label{eq:Lyapunov-integral}
\ME \tilde{V}_{\la}(Y^{\eps}(t)) - \tilde{V}_{\la}(x) \le \int_0^t\left[-\Phi_{\eps}(\la)\tilde{V}_{\la}(Y^{\eps}(s)) + c_{\eps}1_{[0, s_+]}(s)\right]\,\md s \le c_{\eps}t.
\end{equation}
Therefore, from~\eqref{eq:Lyapunov-integral} we have:
\begin{equation}
\label{eq:bdd-new}
\varlimsup\limits_{\eps\downarrow 0}\ME \tilde{V}_{\la}(Y^{\eps}(t)) < \infty.
\end{equation}
From~\eqref{eq:equiv}, ~\eqref{eq:bdd-new} holds for $V_{\la}$ in place of $\tilde{V}_{\la}$. This is also true for $\la' > \la$ slightly larger than $\la$.
 Applying the same uniform integrability argument as in the proof of Lemma~\ref{lemma:DCT}, we complete the proof of Lemma~\ref{lemma:DCT-time}.
\end{proof}

Finally, let us complete the proof of Theorem~\ref{thm:main-1}. From~\eqref{eq:exp-ergodicity-eps}, we have:
\begin{equation}
\label{eq:final-step}
\left|\ME g(Y^{\eps}(t)) - (\pi_{\eps}, g)\right| \le \left[V_{\la}(x) + \left(\pi_{\eps}, V_{\la}\right)\right]e^{-\Phi_{\eps}(\la)t}.
\end{equation}
Taking $\eps = \eps_n$ and letting $n \to \infty$ in~\eqref{eq:final-step}, we use Lemma~\ref{lemma:DCT} and~\ref{lemma:DCT-time} to conclude that
\begin{equation}
\label{eq:final-formula}
\left|\ME g(Y^{0}(t)) - (\pi_{0}, g)\right| \le \left[V_{\la}(x) + \left(\pi_{0}, V_{\la}\right)\right]e^{-\Phi(\la)t}.
\end{equation}
Take the supremum over all functions $g : \BR_+ \to \BR$ which satisfy $|g(x)| \le V_{\la}(x)$ for all $x \in \BR_+$, and complete the proof of Theorem~\ref{thm:main-1} for Lipschitz $p_*$.

\subsection{Proof of Lemma~\ref{lemma:fundamental}} Let us take a probability space with independent Brownian motion $W$ and L\'evy process $L$, and let $L_{\eps}$ be  a subordinator process with L\'evy measure $\mu_{\eps}$, obtained from $L$ by eliminating all jumps of size less than $\eps$ and greater than $\eps^{-1}$. For consistency of notation, let $L_0 := 0$. For every $\eps \ge 0$, we can represent
\begin{equation}
\label{eq:reflected-formula}
\tilde{Y}_{\eps}(t) = y_{\eps} + \int_0^tp_{\ast}(\tilde{Y}_{\eps}(s))\,\md s + \int_0^t\si_{\eps}(\tilde{Y}_{\eps}(s))\,\md W(s) + L_{\eps}(t) + N_{\eps}(t),\ t \ge 0.
\end{equation}
Here, $N_{\eps}$ is a nondecreasing right-continuous process with left limits, with $N_{\eps}(0) = 0$, which can increase only when $\tilde{Y}_{\eps} = 0$. We can rewrite~\eqref{eq:reflected-formula} as
\begin{equation}
\label{eq:thru-X}
\tilde{Y}_{\eps}(t) = \mathcal X_{\eps}(t) + \int_0^tp_{\ast}(\tilde{Y}_{\eps}(s))\,\md s + \int_0^t\si(\tilde{Y}_{\eps}(s))\,\md W(s) + N_{\eps}(t),\ t \ge 0.
\end{equation}
Here, we introduce a new piece of notation:
\begin{equation}
\label{eq:cal-X}
\mathcal X_{\eps}(t) = y_{\eps} + L_{\eps}(t) + \eps W(t),\ t \ge 0.
\end{equation}
The process $L(\cdot) - L_{\eps}(\cdot)$ is nondecreasing. By Assumption 2.3, as $\eps \downarrow 0$, for every $T > 0$,
\begin{equation}
\label{eq:conv-of-L}
\ME\sup\limits_{0 \le t \le T}\left|L(t) - L_{\eps}(t)\right|^2 = \ME\left(L(T) - L_{\eps}(T)\right)^2 = T\left(\int_0^{\eps} + \int_{\eps^{-1}}^{\infty}\right)\,x^2\,\mu(\md x)   \to 0.
\end{equation}
From~\eqref{eq:cal-X} and~\eqref{eq:conv-of-L}, we have:
\begin{equation}
\label{eq:eps-0}
\ME\sup\limits_{0 \le t \le T}\left|\mathcal X_0(t) - \mathcal X_{\eps}(t)\right|^2 \to 0,\ \eps \downarrow 0.
\end{equation}
Fix time horizon $T > 0$, and consider the space $\mathcal E_T$ of all right-continuous adapted processes $Z = (Z(t),\, 0 \le t \le T)$ with left limits such that
$$
\norm{Z}_{2, T}^2 := \ME\sup\limits_{0 \le t \le T}Z^2(t) < \infty.
$$
This is a Banach space with norm $\norm{\cdot}_{2, T}$. Fix an $\mathcal X \in \mathcal E_T$. Let us introduce two mappings
$\mathcal P_{\mathcal X},\, \mathcal S : \mathcal E_T \to \mathcal E_T$: The mapping $\mathcal P_{\mathcal X}$ is given by
$$
\mathcal P_{\mathcal X}(Z)(t) = \mathcal X(t) + \int_0^tp_{\ast}(Z(s))\,\md s + \int_0^t\si(Z(s))\,\md W(s),\ 0 \le t \le T.
$$
Whereas $\mathcal S$ is the classic Skorohod mapping:
$$
\mathcal S(Z)(t) = Z(t) + \sup\limits_{0 \le s \le t}(Z(s))_-,\ 0 \le t \le T,
$$
where $(a)_{-}:=\max(-a, 0)$ for any $a\in\mathbb{R}$. For any $\mathcal X \in \mathcal E_T$, let $\mathcal R_{\mathcal X} := \mathcal S\circ\mathcal P_{\mathcal X}$. Then we can represent~\eqref{eq:thru-X} as
\begin{equation}
\label{eq:fixed-point-equation}
Y_{\eps} = (\mathcal S \circ \mathcal P_{\mathcal X_{\eps}}(Y_{\eps})) = \mathcal R_{\mathcal X_{\eps}}(Y_{\eps}).
\end{equation}
It is straightforward to show, using Lipschitz properties of $p_{\ast}$ and $\si$, that these mappings indeed map $\mathcal E_T$ into $\mathcal E_T$. Moreover, a classic result is that $\mathcal S$ is $1$-Lipschitz. See, for example, \cite{Whitt}. Assume $C(p_{\ast})$ and $C(\si)$ are Lipschitz constants for functions $p_{\ast}$ and $\si$.

\begin{lemma} For $\mathcal X, \mathcal X', \mathcal Z, \mathcal Z' \in \mathcal E_T$, the following Lipschitz property holds with constant
\begin{equation}
\label{eq:C-T}
C_T := C(p_\ast)T + 2C(\si)T^{1/2}.
\end{equation}
\begin{equation}
\label{eq:Lipschitz-general}
\norm{\mathcal R_{\mathcal X}(\mathcal Z) - \mathcal R_{\mathcal X'}(\mathcal Z')}_{2, T} \le C_T\norm{\mathcal Z - \mathcal Z'}_{2, T} + \norm{\mathcal X - \mathcal X'}_{2, T}.
\end{equation}
\label{lemma:Lipschitz-general}
\end{lemma}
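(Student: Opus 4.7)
The plan is to exploit the decomposition $\mathcal R_{\mathcal X} = \mathcal S\circ\mathcal P_{\mathcal X}$ and the fact that the Skorohod map $\mathcal S$ is $1$-Lipschitz with respect to the sup norm (hence with respect to $\norm{\cdot}_{2,T}$). This immediately reduces the claim to showing
$$
\norm{\mathcal P_{\mathcal X}(\mathcal Z) - \mathcal P_{\mathcal X'}(\mathcal Z')}_{2,T} \le C_T\norm{\mathcal Z - \mathcal Z'}_{2,T} + \norm{\mathcal X - \mathcal X'}_{2,T},
$$
after which \eqref{eq:Lipschitz-general} follows by composing with $\mathcal S$.

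For the reduced inequality, I will write
$$
\mathcal P_{\mathcal X}(\mathcal Z)(t) - \mathcal P_{\mathcal X'}(\mathcal Z')(t) = \bigl(\mathcal X(t)-\mathcal X'(t)\bigr) + \int_0^t\bigl[p_\ast(\mathcal Z(s))-p_\ast(\mathcal Z'(s))\bigr]\md s + \int_0^t\bigl[\si(\mathcal Z(s))-\si(\mathcal Z'(s))\bigr]\md W(s),
$$
and use the triangle inequality for $\norm{\cdot}_{2,T}$ to handle the three pieces separately. The $\mathcal X$ piece contributes exactly $\norm{\mathcal X - \mathcal X'}_{2,T}$. For the drift piece, bound the supremum over $t\in[0,T]$ of the integral by $\int_0^T C(p_\ast)\,|\mathcal Z(s)-\mathcal Z'(s)|\,\md s \le C(p_\ast)T\sup_{0\le s\le T}|\mathcal Z(s)-\mathcal Z'(s)|$, and take $L^2$ norms to obtain a contribution of $C(p_\ast)T\,\norm{\mathcal Z - \mathcal Z'}_{2,T}$.

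The main step is the diffusion term, where I will invoke Doob's $L^2$ maximal inequality applied to the square-integrable martingale $M(t) = \int_0^t[\si(\mathcal Z(s))-\si(\mathcal Z'(s))]\md W(s)$. Doob and the Itô isometry give
$$
\ME\sup_{0\le t\le T}M(t)^2 \le 4\,\ME M(T)^2 = 4\int_0^T\ME\bigl[\si(\mathcal Z(s))-\si(\mathcal Z'(s))\bigr]^2\md s \le 4C(\si)^2 T\,\norm{\mathcal Z - \mathcal Z'}_{2,T}^2,
$$
using the Lipschitz property of $\si$ and the trivial bound $\ME|\mathcal Z(s)-\mathcal Z'(s)|^2\le \norm{\mathcal Z-\mathcal Z'}_{2,T}^2$. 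Taking square roots yields a contribution of $2C(\si)T^{1/2}\,\norm{\mathcal Z-\mathcal Z'}_{2,T}$. Summing the three contributions produces exactly the constant $C_T = C(p_\ast)T + 2C(\si)T^{1/2}$ of \eqref{eq:C-T}.

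I do not anticipate a serious obstacle here; the only mild subtlety is making sure the processes $\mathcal P_{\mathcal X}(\mathcal Z)$ and $\mathcal P_{\mathcal X'}(\mathcal Z')$ actually lie in $\mathcal E_T$ so that Doob's inequality and the Itô isometry are applicable. This follows from the Lipschitz-plus-linear-growth bounds on $p_\ast$ and $\si$ together with the assumption $\mathcal Z,\mathcal Z',\mathcal X,\mathcal X'\in\mathcal E_T$, by a short routine estimate mirroring the computation above.
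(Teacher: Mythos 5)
Your proposal is correct and follows essentially the same route as the paper: reduce to $\mathcal P_{\mathcal X}$ via the $1$-Lipschitz Skorohod map, split the difference into the $\mathcal X$-term, the drift integral, and the stochastic integral, and bound the latter two via the Lipschitz bound on $p_\ast$ and Doob's $L^2$ maximal inequality plus the It\^o isometry for $\si$, respectively. The resulting constant $C_T = C(p_\ast)T + 2C(\si)T^{1/2}$ emerges in the same way in both arguments.
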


\begin{proof} Since $\mathcal S$ is $1$-Lipschitz, it suffices to show~\eqref{eq:Lipschitz-general} for $\mathcal P_{\mathcal X}$ instead of $\mathcal R_{\mathcal X}$. We can express the difference between $\mathcal P_{\mathcal X}(\mathcal Z)$ and $\mathcal P_{\mathcal X'}(\mathcal Z')$ as follows: for $t \in [0, T]$,
\begin{align}
\label{eq:big-difference}
\begin{split}
\mathcal P_{\mathcal X}&(\mathcal Z)(t) - \mathcal P_{\mathcal X'}(\mathcal Z')(t)  = \mathcal X(t) - \mathcal X'(t) \\ & + \int_0^t\left[p_\ast(\mathcal Z(s)) - p_\ast(\mathcal Z'(s))\right]\,\md s + \int_0^t\left[\si(\mathcal Z(s)) - \si(\mathcal Z'(s))\right]\,\md W(s).
\end{split}
\end{align}
Denoting by $I$ and $M$ the second and third terms in the right-hand side of~\eqref{eq:big-difference}, we have:
\begin{align}
\label{eq:norm-diff}
\norm{\mathcal P_{\mathcal X}(\mathcal Z)(t) - \mathcal P_{\mathcal X'}(\mathcal Z')(t)}_{2, T} \le \norm{\mathcal X - \mathcal X'}_{2, T} + \norm{I}_{2, T} + \norm{M}_{2, T}.
\end{align}
The norm $\norm{I}_{2, T}$ is estimated in a straightforward way using the Lipschitz property of $\sigma$:
\begin{align}
\label{eq:term-I}
\begin{split}
\norm{I}_{2, T}^2 &= \ME\sup\limits_{0 \le t \le T}I^2(t)\le \ME\sup\limits_{0 \le t \le T}\left(\int_0^tC(p_\ast)\left[\mathcal Z(s) - \mathcal Z'(s)\right]\,\md s\right)^2\\& \le T^2C^2(p_\ast)\cdot \ME\sup\limits_{0 \le s \le T}\left[\mathcal Z(s) - \mathcal Z'(s)\right]^2  = T^2C^2(p_\ast)\norm{\mathcal Z - \mathcal Z'}^2_{2, T}.
\end{split}
\end{align}
Finally, the norm $\norm{M}_{2, T}$ can be estimated using the martingale inequalities:
\begin{align}
\label{eq:term-M}
\begin{split}
\norm{M}_{2, T}^2 & =  \ME\sup\limits_{0 \le t \le T}M^2(t)\\
& \le 4\ME M^2(T)\\
& = 4\int_0^T\left[\si(\mathcal Z(s)) - \si(\mathcal Z'(s))\right]^2\,\md s \\
& \le
4C^2(\si)T\cdot\ME\sup\limits_{0 \le t \le T}{(\mathcal Z(t) - \mathcal Z'(t))}^2\\
& = 4C^2(\si)T\norm{\mathcal Z - \mathcal Z'}^2_{2, T}.
\end{split}
\end{align}
Combining~\eqref{eq:norm-diff},~\eqref{eq:term-I},~\eqref{eq:term-M}, we complete the proof of~\eqref{eq:Lipschitz-general}.
\end{proof}

For small enough $T$, the constant $C_T$ from~\eqref{eq:C-T} is strictly less than $1$. Assume this is the case until the end of the proof. Then for every $\mathcal X \in \mathcal E_T$, the mapping $\mathcal R_{\mathcal X}$ is contractive. Therefore, it has a unique fixed point, which can be obtained by successive approximations:
$$
\mathcal Y(\mathcal X) = \lim\limits_{n \to \infty}\mathcal R_{\mathcal X}^n(\mathcal Z).
$$
In particular, the equation~\eqref{eq:fixed-point-equation} has a unique solution, which is obtained by successive approximations:
$$
Y_{\eps} = \lim\limits_{n \to \infty}\mathcal R^n_{\mathcal X_{\eps}}(\mathcal Z).
$$
We can take $\mathcal Z = 0$ as initial condition, or any other element in $\mathcal E_T$. Applying the mappings in Lemma~\ref{lemma:Lipschitz-general} once again, we have:
$$
\norm{\mathcal R^2_{\mathcal X}(\mathcal Z) - \mathcal R^2_{\mathcal X'}(\mathcal Z')}_{2, T}  \le C_T^2\norm{\mathcal Z - \mathcal Z'} + (1+ C_T)\norm{\mathcal X - \mathcal X'}.
$$
By induction over $n = 1, 2, \ldots$ we get:
\begin{align}
\label{eq:iteration-Lipschitz}
\begin{split}
\norm{\mathcal R^n_{\mathcal X}(\mathcal Z) - \mathcal R^n_{\mathcal X'}(\mathcal Z')}_{2, T} \le C_T^n\norm{\mathcal Z - \mathcal Z'}_{2, T} + \left(1 + C_T + \ldots + C_T^{n-1}\right)\norm{\mathcal X - \mathcal X'}_{2, T}.
\end{split}
\end{align}
Let $n \to \infty$ in~\eqref{eq:iteration-Lipschitz}. If $C_T < 1$, then
\begin{equation}
\label{eq:Lipschitz-solution}
\norm{\mathcal Y(\mathcal X) - \mathcal Y(\mathcal X')}_{2, T} \le \frac1{1 - C_T}\norm{\mathcal X - \mathcal X'}_{2, T}.
\end{equation}
Letting $\mathcal X = \mathcal X_0$ and $\mathcal X' = \mathcal X_{\eps}$ in~\eqref{eq:Lipschitz-solution}, and using~\eqref{eq:eps-0}, we complete the proof of Lemma~\ref{lemma:fundamental}.

\section{Concluding remarks}
We showed that the convergence of ruin probabilities in a rather broad class of risk processes is achieved exponentially fast. This rate is easy to compute (at least in the examples considered in Section \ref{sec:ComputationConvergenceRate}), and happened to be sharp when the premium rate and its variability are independent from the current wealth of the insurance company. A natural question relies on the practical implication of having access to the value of the rate of exponential convergence; in particular, whether this leads to an numerical approximation of the finite time ruin probability. This issue has been discussed in Asmussen \cite{As84}, the answer was negative. Another direction is to relax the condition upon the tail of the claim size. It is of practical interest to let the claim size distribution be heavy tailed. An extension of the early work of Asmussen and Teugels~\cite{AsTe96} could be envisaged. For example, in the work of Tang \cite{Ta05}, a compound Poisson risk model under constant interest force with sub-exponentially distributed claim size is considered. When comparing the asymptotics provided by Tang \cite[(2.5), (3.2)]{Ta05}, it seems that exponential convergence holds for large initial reserves. Yet another direction for future research might be to relax the Lipschitz property of the drift.

\section*{Acknowledgements} Pierre-Olivier Goffard was partially funded by a Center of Actuarial Excellence Education Grant given to the University of California, Santa Barbara, from the Society of Actuaries. Andrey Sarantsev was supported in part by the NSF grant DMS 1409434 (with Jean-Pierre Fouque as a Principal Investigator) during this work.

\appendix
\section{Proof of Lemma~\ref{lemma:tech}}\label{appendix:ProofLemma1} We combine Assumption~\ref{asmp:finite-exp} with~\eqref{eq:classic-mean} to conclude this. Indeed, from Assumption~\ref{asmp:finite-exp} it follows that
\begin{equation}
\label{eq:int-1-infty}
\int_1^{\infty}x\,\mu(\md x) < \infty,
\end{equation}
and from~\eqref{eq:classic-mean} we conclude that
\begin{equation}
\label{eq:int-0-1}
\int_0^1x\,\mu(\md x) < \infty.
\end{equation}
Condition~\eqref{eq:Mean} then immediately follows from~\eqref{eq:int-0-1} and~\eqref{eq:int-1-infty}.
\section{Proof of Lemma~\ref{lem:ExistenceRProcess}}\label{appendix:ProofLemma3} Using the notation similar to the proof of Lemma~\ref{lemma:fundamental}, we need to find the fixed point of the mapping $\mathcal R_L$. But from Lemma~\ref{lemma:Lipschitz-general}, we get that this mapping $\mathcal R_L$ is $C_T$-Lipschitz with $C_T$ taken from~\eqref{eq:C-T}. For small enough $T$, we have $C_{T} < 1$, and therefore the fixed point exists and is unique by the classic theorem. Thus we can prove strong existence and pathwise uniqueness on the time interval $[0, T]$, and then on $[T, 2T]$, $[2T, 3T]$, etc. The form of the generator then follows from straightforward application of It\^o's formula.
\section{Proof of Lemma~\ref{lem:ExistenceXProcess}}\label{appendix:ProofLemma2} Similar to the proof of Lemma~\ref{lemma:ExistenceRProcess}, but without reflection; therefore we can take an identity map instead of $\mathcal R_{\mathcal X}$, which is of course $1$-Lipschitz. The rest of the proof works verbatim.

\section{Proof of Lemma~\ref{lemma:stoch-ordered-original}} 
\label{appendix:ProofStochOrder}
Consider two copies $X_1$ and $X_2$ of this process, starting from $X_1(0) = x_1$ and $X_2(0) = x_2$, where $x_1 > x_2 \ge 0$. Let us couple them: that is, we create their copies on a common probability space. using the same driving Brownian motion $W$ and L\'evy process $L$. We can do this by Lemma~\ref{lem:ExistenceXProcess}. Next, we aim to prove that $X_1(t) \ge X_2(t)$ for all $t \ge 0$ simultaneously, with probability $1$. This would automatically imply that $\MP(X_1(t) \ge c) \ge \MP(X_2(t) \ge c)$ for all $t, c \ge 0$, which is the property (b) in Theorem~\ref{thm:stoch-order}.

\smallskip

Assume there exists a $t > 0$ such that $X_1(t) < X_2(t)$. Let $\tau := \inf\{t \ge 0\mid X_1(t) < X_2(t)\}$. By right-continuity of $X_1$ and $X_2$, we must have $X_1(\tau) \le X_2(\tau)$. But we cannot have $X_1(\tau) = X_2(\tau)$, because then by strong Markov property we would have $X_1(t) = X_2(t)$ for all $t \ge \tau$ (recall that $\tau$ is a stopping time). Therefore,
\begin{equation}
\label{eq:contradiction-1}
X_1(\tau) < X_2(\tau),\ \mbox{but}\ X_1(\tau-) \ge X_2(\tau-).
\end{equation}
Thus, $\tau$ is a jump time for both $X_1$ and $X_2$, that is, for the L\'evy process $L$. The displacement during the jump must be the same for $X_1$ and $X_2$:
\begin{equation}
\label{eq:contradiction-2}
X_1(\tau) - X_1(\tau-) = -\left[L(\tau) - L(\tau-)\right] = X_2(\tau) - X_2(\tau-).
\end{equation}
The contradiction between~\eqref{eq:contradiction-1} and~\eqref{eq:contradiction-2} completes the proof of Lemma~\ref{lemma:stoch-ordered-original}.

\medskip\noindent

\end{document}